\author{Enea Milio}
\title{Computing isogenies between Jacobians of curves of genus~2~and~3}
\date{}
\newcommand{\Z}{\mathbb{Z}}
\newcommand{\Q}{\mathbb{Q}}
\newcommand{\C}{\mathbb{C}}
\newcommand{\HH}{\mathcal{H}}
\newcommand{\bb}{\backslash}
\newenvironment{psmallmatrix}{\left [\begin{smallmatrix}}{\end{smallmatrix}\right ]}
\newenvironment{ppsmallmatrix}{\left (\begin{smallmatrix}}{\end{smallmatrix}\right )}
\newcommand{\thetacar}[2]{\theta\begin{psmallmatrix}#1\\ #2\end{psmallmatrix}}
\newcommand*{\newaliascnt}[2]{%
  \begingroup
    \def\AC@glet##1{%
      \global\expandafter\let\csname##1#1\expandafter\endcsname
        \csname##1#2\endcsname
    }%
    \@ifundefined{c@#2}{%
      \@nocounterr{#2}%
    }{%
        \AC@glet{c@}%
        \AC@glet{the}%
        \AC@glet{theH}%
        \AC@glet{p@}%
        \expandafter\gdef\csname AC@cnt@#1\endcsname{#2}%
        \expandafter\gdef\csname cl@#1\expandafter\endcsname
        \expandafter{\csname cl@#2\endcsname}%
    }%
  \endgroup
}
\newtheorem{theo}{Theorem}
\newaliascnt{algocf}{theo}
\newtheorem{deff}[theo]{Definition}
\newtheorem{prop}[theo]{Proposition}
\newtheorem{coro}[theo]{Corollary}
\theoremstyle{definition}
\newtheorem{remm}[theo]{Remark}
\newtheorem{exem}[theo]{Example}
\newcommand{\lquo}[2]{\leavevmode\kern-.1em\lower.25ex\hbox{$#2$}\kern-.1em\backslash\kern-.1em\raise.2ex\hbox{$#1$}}
\newcommand{\rquo}[2]{\leavevmode\kern-.1em\raise.2ex\hbox{$#1$}\kern-.1em/\kern-.1em\lower.25ex\hbox{$#2$}}
\DeclareMathOperator{\tr}{tr}
\DeclareMathOperator{\Pic}{Pic}
\begin{document}

\maketitle

\begin{abstract}
  We present a quasi-linear algorithm to compute (separable) isogenies of
  degree $\ell^g$, for $\ell$ an odd prime number, between Jacobians of curves of genus $g=2$ and $3$ starting from the equation
  of the curve $\mathcal{C}$ and a maximal isotropic subgroup $\mathcal{V}$ of the $\ell$-torsion, generalizing Vélu's formula from genus $1$.
  Denoting by $J_{\mathcal{C}}$ the Jacobian of $\mathcal{C}$, the isogeny is $J_{\mathcal{C}}\to J_{\mathcal{C}}/\mathcal{V}$. Thus $\mathcal{V}$ is the kernel of the isogeny and we compute only isogenies with such kernels.
  This work is based on the paper \emph{Computing functions on Jacobians and their quotients} of Jean-Marc Couveignes and Tony Ezome.
  We improve their genus $2$  algorithm, generalize it to genus $3$ hyperelliptic curves
  and introduce a way to deal with the genus $3$ non-hyperelliptic case, using algebraic theta functions.
\end{abstract}

\section{Introduction}
  
Starting from a projective, smooth, absolutely integral curve $\mathcal{C}$ of genus $g\in\{2,3\}$ over a finite field $K$ and a maximal isotropic subgroup $\mathcal{V}$
of the $\ell$-torsion of the Jacobian $J_{\mathcal{C}}$ of $\mathcal{C}$, we want to compute the equation of a $(\ell,\ldots,\ell)$-isogenous
curve $\mathcal{D}$ such that $J_{\mathcal{D}}=J_{\mathcal{C}}/\mathcal{V}$ and equations for the isogeny $f:J_{\mathcal{C}}\to J_{\mathcal{D}}$ allowing one to compute the image of a point in $J_{\mathcal{C}}$ by the isogeny~$f$.
The computation of $\mathcal{V}$ is a different problem that we do not treat here. We take it as an input to our algorithms.
It is represented as a set of $\ell^g$ classes of divisors of the curve $\mathcal{C}$ over $\bar{K}$ in Mumford representation or
as a collection of fields extensions $(L_i/K)$ and points $w_i\in\mathcal{V}(L_i)$ in Mumford representation such that
there are no pairs of conjugate points.

In genus $g=1$, this problem is solved by Vélu's formula \cite{Velu}. Let $E_1$ be an elliptic curve defined over $K$ in Weierstrass form and let $G$ be a finite subgroup of $E_1$
of cardinality a prime number $\ell$. Then the elliptic curve $E_2=E_1/G$ is isogenous to $E_1$. A point $P$ of $E_1$ is sent by the isogeny $f:E_1\to E_2$ to the point with coordinates
\[x(f(P))=x(P)+\sum_{Q\in G\bb \{0\}}x(P+Q)-x(Q),\quad y(f(P))=y(P)+\sum_{Q\in G\bb \{0\}}y(P+Q)-y(Q).\]
Then, using the addition formula, it is possible to obtain the equation of $E_2$ in the Weierstrass form and a rational fraction $F$
such that the isogeny is $f:(x,y)\in E_1\mapsto (F(x),yF'(x))\in E_2$. See also \cite[Section 4.1]{MR2398793}.

For $g\ge 2$, a first generalization has been done by Cosset, Lubicz and Robert in \cite{CossetRobert,LubRob,LubRob2}. The authors explain how to compute separable
isogenies between principally polarized abelian varieties $A$ and $A/\mathcal{V}$ of dimension $g$ with a complexity of $\tilde{O}(\ell^{\frac{rg}2})$ operations in $K$, where $r=2$ if the odd prime number
$\ell$, different from the characteristic of $K$, is a sum of two squares and $r=4$ otherwise. For the former, the complexity is quasi-optimal since
it is quasi-linear in $\ell^g$, the degree of the isogeny (the cardinality of the maximal isotropic subgroup $\mathcal{V}$ of $A[\ell]$).
Here, the abelian varieties are represented through their theta null points.
A Magma \cite{Magma} package, AVIsogenies \cite{Avi}, implementing the ideas of these papers is available but the implementation concerns only the dimension $2$ case, that is,
generically, Jacobians of genus $2$ curves.

Note that for $g=2$ and $\ell=2$, the isogenies between Jacobians of curves can be computed using the Richelot
construction (see \cite[Chapter 9]{Prolegomena}). 
An algebraic-geometric approach for $g=2$ has been introduced by Dolgachev and Lehavi in \cite{DolgLeh} where the authors give
an effective algorithm for $\ell=3$ only. This approach is simplified and made more explicit in \cite{SmithDolgLeh}
resulting in an efficient algorithm for $\ell=3$.
For $g=3$ and $\ell=2$, there exists an algorithm \cite{SmithDLP} computing some of the possible $(2,2,2)$-isogenies from the Jacobian
of a hyperelliptic curve of genus $3$ over a finite field of characteristic $>3$, using Recillas' trigonal construction \cite{Recillas}
 and another algorithm \cite{LehRitz} computing the isogenous curve.

Another generalization, which is the starting point of the present paper, has been introduced in \cite{CouvEz}. In this paper, Ezome and Couveignes first explain how to define and compute functions 
$\eta$ and $\eta_f$ ($f$ is the isogeny of degree $\ell^g$)
from $J_{\mathcal{C}}$ to $\bar{K}$
for any genus $g\ge 2$, for any field $K$ of
characteristic $p$ not equal to $\ell$ or $2$, and for $\ell$ an odd prime.
Note that the $\eta_f$ function is a function $J_{\mathcal{C}}\to\bar{K}$ invariant by $\mathcal{V}$.
Then they focus on the genus $2$ case over finite fields.
The computation of the equation of an isogenous curve $\mathcal{D}$ is done in two steps.
First we define a map $\phi$ from $J_{\mathcal{C}}$ to the Kummer surface of $J_{\mathcal{D}}=J_{\mathcal{C}}/\mathcal{V}$ viewed in $\mathbb{P}^3$
using $\eta_f$ functions and compute the image in $\mathbb{P}^3$ of the $2$-torsion points of $J_{\mathcal{C}}$ using the geometry of Kummer surfaces.
Then according to this geometry, the intersection of a trope (a particular hyperplane) with the Kummer surface is a conic
containing the image of exactly $6$ $2$-torsion points corresponding to the $6$ Weierstrass points of $\mathcal{D}$.
Finally they explain how to describe the isogeny through rational fractions of degrees in $O(\ell)$
allowing one to compute the image of a point of $J_{\mathcal{C}}$ in $J_{\mathcal{D}}$ by the isogeny.
The resulting algorithm is quasi-linear in the degree $\ell^2$ of the isogeny (independently on the writing of $\ell$ as a sum of squares).

In this paper, we recall in Section \ref{sec:deffEta} the definition of the $\eta$ and $\eta_f$ functions and the algorithms
of \cite{CouvEz} to evaluate them.
These algorithms are quasi-linear in the degree of the isogeny and we do not present a theoretical improvement of them.
Thus these functions are seen as building blocks and we try to reduce as much as possible the number of times we evaluate them to reduce the
practical time of computation.

In Section \ref{sec:kumm}, we describe the particular geometry of Kummer varieties, which admit a $(m,n)$-configuration when
seen in $\mathbb{P}^{2^g-1}$, that is a set of $m$ hyperplanes (the tropes) and $m$ points (the image in $\mathbb{P}^{2^g-1}$ of the $2$-torsion points) such that each hyperplane contains $n$ of the $m$ points
and each of the $m$ points is contained in exactly $n$ hyperplanes.

Then in Section \ref{sec:hyp}, in the genus $2$ case, we use the $(16,6)$-configuration to compute the image of the $2$-torsion
points in $\mathbb{P}^3$, and explain how to deduce  the equation of $\mathcal{D}$ from it
and how to optimize this computation. We prove that the knowledge of the equation of the quartic describing the Kummer surface 
(which is used in \cite{CouvEz})
 is not necessary and use only $11$ evaluations of $\eta_f$ functions. This is a practical improvement compared to \cite{CouvEz} as the
 computation of the equation of the quartic requires in general around $140$ evaluations of $\eta_f$ functions.
We then turn to genus $3$. Here the curves are either hyperelliptic or
non-hyperelliptic (in which case they can be viewed as plane quartics).  
These two cases must be treated differently. We describe how the genus $2$ method can be naturally extended to the case where  $\mathcal{D}$
is hyperelliptic, using the $(64,29)$-configuration of the Kummer threefold. We focus on genus $3$ but it is clear that similar results exist
for $g>3$.

In Section \ref{sec:eqiso}, we recall the definition of the rational fractions needed to describe the isogeny and how to
compute them following \cite{CouvEz}, except for one step which is not practical.
Indeed, this step requires the computation of many algebraic relations between the $9$ functions forming a basis of
$H^0(J_{\mathcal{C}}/\mathcal{V},\mathcal{O}_{J_{\mathcal{C}}/\mathcal{V}}(3\mathcal{Y}))$, where $\mathcal{Y}$ is an effective divisor on $J_{\mathcal{C}}/\mathcal{V}$ associated to a  principal polarization of $J_\mathcal{C}/\mathcal{V}$.
We give another solution based on a good model of the Kummer surface allowing one to compute the
pseudo-addition law and to lift a point of the Kummer to the Jacobian.
We extend all these results in the genus $3$ case, where this pseudo-addition law has been recently described in \cite{Stoll}.

In Section \ref{sec:compatibility}, we construct algebraic theta functions as functions satisfying the same algebraic relations
between the analytic theta functions. We use these algebraic theta functions to compute the equation of $\mathcal{D}$ in genus $2$ through
the description of its Rosenhain form by theta constants.
Then we focus on the generic genus $3$ case where $\mathcal{D}$ is non-hyperelliptic. 
We use theta based formulas coming from the theory
of the reconstruction of a plane quartic from its bitangents.

Finally, Section \ref{sec:impl} is about our implementation. 

\section{Evaluation of the $\eta$ and $\eta_f$ functions}\label{sec:deffEta}
In this section, we recall the definition of the $\eta$ and $\eta_f$ functions of \cite{CouvEz} (note that we replace the name $\eta_X$ by $\eta_f$: $X$
denotes a divisor while $f$ denotes the isogeny, see Subsection \ref{subsec:etaf}).
We use the same notation as in that paper
and refer to it for more details.
\subsection{Definitions}\label{subsec:def}
This is \cite[Section 2.1]{CouvEz}.  
Let $\mathcal{C}$ be a projective, smooth, absolutely integral curve of genus $g\ge 2$ over a field $K$. 
We denote by $\Pic(\mathcal{C})$ its Picard group, $\Pic^d(\mathcal{C})$ the component of the Picard group
of linear equivalence classes of divisors (formal sums of points of $\mathcal{C}$) of degree $d$  and $J_{\mathcal{C}}:=\Pic^0(\mathcal{C})$ the Jacobian variety of $\mathcal{C}$.
If $D$ is a divisor on $\mathcal{C}$, then we denote by $\iota(D)$ its linear equivalence class. 

Let $W\subset \Pic^{g-1}(\mathcal{C})$ be the algebraic set representing classes of effective divisors of degree $g-1$.
The \emph{theta characteristics} are the $K$-rational points $\theta$ in $\Pic^{g-1}(\mathcal{C})$ such that $2\theta=\omega$, where $\omega$ denotes
the canonical divisor class. The difference of any pair of theta characteristics is a 2-torsion point in $J_{\mathcal{C}}$.
The translate $W_{-\theta}$ of $W$ by $-\theta$ is a divisor on $J_{\mathcal{C}}$. 
If $D$ is any effective divisor of $\mathcal{C}$ of degree $g-1$, then, by the Riemann--Roch theorem on effective divisors, $\ell(D)=\ell(\Omega-D)\ge 1$,
with $\Omega$ a divisor in the linear class of $\omega$
(see \cite[Chapter 2.3, Pages 244--245]{GH78}).
This implies that $[-1]^*W=W_{-\omega}$ and we deduce from this that \begin{equation}[-1]^*W_{-\theta}=W_{-\theta}.\end{equation}
Thus, $W_{-\theta}$  is said to be a \emph{symmetric} divisor on $J_{\mathcal{C}}$.

Consider now any $K$-point $O$ on $\mathcal{C}$, whose linear equivalence class is $o=\iota(O)$ in $\Pic^1(\mathcal{C})$.
The translate $W_{-(g-1)o}$ of $W$  by $-(g-1)o$ is a divisor on $J_{\mathcal{C}}$
but not necessarily a symmetric one. Taking $\vartheta=\theta-(g-1)o\in J_{\mathcal{C}}(K)$ we can construct a symmetric divisor
\begin{equation}\label{eq:varthetasym}[-1]^*W_{-(g-1)o-\vartheta}=W_{-(g-1)o-\vartheta}.\end{equation}
Let $I$ be a positive integer, $e_1,\ldots,e_I\in \Z$ and $u_1,\ldots,u_I\in J_{\mathcal{C}}(\bar{K})$. The formal sum $\frak{u}=\sum_{1\le i\le I}e_i[u_i]$ is a zero-cycle on $J_{\mathcal{C},\bar{K}}$.
Define the \emph{sum} and \emph{degree}  functions of a zero-cycle by
\begin{equation}s(\frak{u})=\sum_{1\le i\le I}e_iu_i\in J_{\mathcal{C}}(\bar{K})\qquad\textrm{and}\qquad \deg(\frak{u})=\sum_{1\le i \le I}e_i\in\Z.\end{equation}
Let $D$ be a divisor on $J_{\mathcal{C},\bar{K}}$.
The divisor $\sum_{1\le i \le I}e_iD_{u_i}-D_{s(\frak{u})}-(\deg(\frak{u})-1)D$ is principal
(\cite[Chapter III, Section 3, Corollary 1]{LangAV})
so it defines a function up to a multiplicative constant.
To fix this constant, we choose a point $y\in J_{\mathcal{C}}(\bar{K})$ and consider the function whose evaluation at $y$ is $1$. This implies that we want $y$  not to be in the support of this divisor. This unique function is denoted by $\eta_D[\frak{u},y]$. To summarize, $\eta_D[\frak{u},y]$ satisfies
\begin{equation}\label{DefEta}(\eta_D[\frak{u},y])= \sum_{1\le i \le I}e_iD_{u_i}-D_{s(\frak{u})}-(\deg(\frak{u})-1)D\qquad \textrm{and}\qquad \eta_D[\frak{u},y](y)=1.\end{equation}
We will sometimes denote by $\eta_D[\frak{u}]$ the function defined up to
a multiplicative constant.
Moreover, we have the following additive property
\begin{equation}\label{EqAdd} \eta_D[\frak{u}+\frak{v},y]=\eta_D[\frak{u},y]\cdot\eta_D[\frak{v},y]\cdot\eta_D[[s(\frak{u})]+[s(\frak{v})],y],\end{equation}  
which can be proved by comparing divisors.

We are mainly interested in the cases where $D=W_{-(g-1)o}$ or $D=W_{-(g-1)o-\vartheta}=W_{-\theta}$. Note that we have
\begin{equation}\label{eq:evaltheta} \eta_{W_{-\theta}}[\frak{u},y](x)=\eta_{W_{-(g-1)o}}[\frak{u},y+\vartheta](x+\vartheta)\end{equation}
so that we will focus on the first divisor; and to simplify the notations, we write $\eta[\frak{u},y]$ instead of $\eta_{W_{-(g-1)o}}[\frak{u},y]$.

\subsection{Evaluation of $\eta[\frak{u},y]$}\label{subsec:evaleta}
Fix $\frak{u}=\sum_{1\le i\le I} e_i[u_i]$ a zero-cycle on $J_{\mathcal{C}}$ with $u_i\in J_{\mathcal{C}}(K)$ for $1\le i\le I$, $y\in J_{\mathcal{C}}(K)$ not in the support of $\eta[\frak{u}]$ and  $x\in J_{\mathcal{C}}(K)$ not in the support
of $\eta[\frak{u},y]$.
Assume that $x=\iota(D_x-gO)$ and $y=\iota(D_y-gO)$ where $D_x$ and $D_y$ are effective divisors of the curve $\mathcal{C}$ of degree $g$ not having $O$ in their support (this is the generic case).
Write $D_x=X_1+\ldots+X_g$ and $D_y=Y_1+\ldots+Y_g$. This writing is unique (see \cite[Section 2.6]{Claus}).
Make also the assumption that $\deg(\frak{u})=0\in \Z$ and $s(\frak{u})=0\in J_{\mathcal{C}}(K)$. This is not a restriction because if $\frak{u}$ does not satisfy these properties, then
the zero-cycle $\frak{u'}=\frak{u}-[s(\frak{u})]-(\deg(\frak{u})-1)[0]$ does and the functions $\eta[\frak{u}]$ and $\eta[\frak{u'}]$ have the same divisor.

The computation of $\eta[\frak{u},y](x)$ goes as follows. See \cite[Section 2]{CouvEz} for the details.

\begin{enumerate}
\item For every $1\le i \le I$, find an effective divisor $D^{(i)}$ of degree $2g-1$ such that $D^{(i)}$ meets neither $D_x$ nor $D_y$ and $\iota(D^{(i)})-\omega-o$
  is the class $u_i$. Taking $U_i-gO$ in the class of $u_i$, where $U_i$ is effective of degree $g$, and taking a canonical divisor $\Omega$ on $\mathcal{C}$, the divisor $D^{(i)}$ can be found looking  
  at the Riemann--Roch space $\mathcal{L}(U_i-(g-1)O+\Omega)$.
  The condition on the degree of $D^{(i)}$ and the Riemann--Roch theorem say that $\ell(D^{(i)})=g$. 
\item Find a non-zero function $h$ in $K(\mathcal{C})$  with divisor $\sum_{1\le i \le I}e_i D^{(i)}$. This function exists thanks to the conditions on the zero-cycle. 
\item For every $1\le i\le I$,  compute a basis $f^{(i)}=(f_k^{(i)})_{1\le k\le g}$ of $\mathcal{L}(D^{(i)})$. This step and the previous one are an effective Riemann--Roch theorem. 
\item For every $1\le i\le I$, compute $\delta_x^{(i)}:=\det(f_k^{(i)}(X_j))_{1\le k,j\le g}$ and $\delta_y^{(i)}:=\det(f_k^{(i)}(Y_j))_{1\le k,j\le g}$.
\item Compute $\alpha[h](x):=\prod_{i=1}^gh(X_i)$ and $\alpha[h](y):=\prod_{i=1}^gh(Y_i)$.
\item Return $\frac{\alpha[h](x)}{\alpha[h](y)}\cdot\prod_{1\le i\le I}(\delta_x^{(i)}/\delta_y^{(i)})^{e_i}$ (which is equal to $\eta[\frak{u},y](x)$).
\end{enumerate}

In the case that $D_x$ (or $D_y$) is not simple, then the $\delta_x^{(i)}$ are zero and the product $\prod_{1\le i\le I}(\delta_x^{(i)})^{e_i}$ is not defined (some $e_i$ are negative)
while $\eta[\frak{u},y](x)$ is. This last value can be obtained considering the field $L=K((t))$ for a formal parameter $t$. Indeed, assume for example $D_x=nX_1+X_{n+1}+\ldots+X_g$ and $X_i\ne X_j$ if $i\ne j$. Fix a local parameter $z\in K(\mathcal{C})$ at $X_1$ and $n$ distinct scalars $(a_j)_{1\le j\le n}$ in $K$ (if $\# K$ is too small, then consider a small degree extension of it).
Denote by $X_1(t),X_2(t),\ldots,X_n(t)$
the points in $\mathcal{C}(L)$ associated to the values $a_1t,\dots,a_nt$ of the local parameter $z$.
Do the computations of the algorithm with $D_x(t)=X_1(t)+\ldots+X_n(t)+X_{n+1}+\ldots+X_g$ and set $t=0$ in the result. 
According to \cite[Section 2.6]{CouvEz}, the necessary $t$-adic accuracy  to obtain the good result is $g(g-1)/2$.

Denote by $\frak{O}$ a positive absolute constant. Any statement containing this symbol is true if this symbol is replaced by
a big enough real number. Similarly, denote by $\frak{e}(z)$ a real function in the real parameter $z$ belonging to the class $o(1)$.
\begin{theo} There exists a deterministic algorithm that takes as input
  \begin{itemize}
  \item a finite field $K$ with cardinality $q$;
  \item a curve $\mathcal{C}$ of genus $g\ge 2$ over $K$;
  \item a collection of $K$-points $(u_i)_{1\le i \le I}$ in the Jacobian $J_{\mathcal{C}}$ of $\mathcal{C}$;
  \item a zero-cycle $\frak{u}=\sum_{1\le i\le I}e_i[u_i]$ on $J_{\mathcal{C}}$ such that $\deg(\frak{u})=0$ and $s(\frak{u})=0$;
  \item a point $O$ in $\mathcal{C}(K)$;
  \item and two points $x,y\in J_{\mathcal{C}}(K)$ not in $\bigcup_{1\le i\le I} W_{-(g-1)o+u_i}$.
  \end{itemize}
  Let $|e|:=\sum_{1\le i\le I}|e_i|$.
The algorithm computes $\eta[\frak{u},y](x)$ in time $(g\cdot |e|)^\frak{O}\cdot(\log{q})^{1+\frak{e}(q)}$.
Using fast exponentiation and equation (\ref{EqAdd}), the complexity is $g^{\frak{O}}\cdot I\cdot (\log{|e|})\cdot(\log{q})^{1+\frak{e}(q)}$
but there exists a subset $\emph{FAIL}(K,\mathcal{C},\frak{u},O)$ of $J_{\mathcal{C}}(K)$ with density $\le g^{\frak{O}g}\cdot I\cdot\log(|e|)/q$ such that the algorithm
succeeds whenever neither  $x$ nor $y$ belongs to this subset.
\end{theo}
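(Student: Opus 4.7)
The plan is to verify the algorithm of Subsection~\ref{subsec:evaleta} meets the first bound directly, then bootstrap to the second bound using the additive relation~(\ref{EqAdd}), and finally bound the failure locus by a counting argument on the relevant Theta translates.

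First I would establish correctness and the naive complexity $(g\cdot|e|)^{\mathfrak{O}}\cdot(\log q)^{1+\mathfrak{e}(q)}$. Correctness amounts to checking that the value returned in step~6 defines a function on $J_{\mathcal{C}}$ with the divisor prescribed by~(\ref{DefEta}) and normalized to $1$ at $y$. This is where one compares the divisor contributed by $\prod_i (\delta_x^{(i)})^{e_i}/\delta_y^{(i)})^{e_i}$ (which tracks the translate of $W_{-(g-1)o}$ by $u_i$, via the geometric interpretation of the determinants $\det(f_k^{(i)}(X_j))$ as pullbacks of theta-like sections) and the divisor of $\alpha[h](x)/\alpha[h](y)$ (which absorbs the ambiguity introduced by the choice of the auxiliary divisors $D^{(i)}$), and sees that the hypotheses $\deg(\mathfrak{u})=0$ and $s(\mathfrak{u})=0$ make the principal-divisor correction exactly right. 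For complexity, steps~1 and~3 are effective Riemann--Roch in genus $g$ (polynomial in $g$ and $\log q$); steps~2,~4, and~5 cost polynomially in $g$ per cycle-term and linearly in $I$; the naive handling of $e_i$ as a repeated exponentiation contributes a factor $|e|$; arithmetic in $K$ is $(\log q)^{1+\mathfrak{e}(q)}$ per operation. The non-simple-$D_x$ case is handled by the $t$-adic trick recalled at the end of Subsection~\ref{subsec:evaleta}, which only inflates the constant in $\mathfrak{O}$ by a factor depending on $g$.

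Next I would obtain the improved bound $g^{\mathfrak{O}}\cdot I\cdot(\log|e|)\cdot(\log q)^{1+\mathfrak{e}(q)}$ by square-and-multiply on each $e_i$. For a fixed $i$, I apply~(\ref{EqAdd}) with $\mathfrak{u}=\mathfrak{v}$ to double $\mathfrak{w}_k:=2^k[u_i]-2^k[0]$ at the cost of one extra $\eta$-evaluation per step of the form $\eta[[s(\mathfrak{w}_k)]+[s(\mathfrak{w}_k)],y]$, and combine according to the binary expansion of $e_i$. This gives $O(\log|e_i|)$ direct evaluations per $i$, and one final accumulation across $i$ via~(\ref{EqAdd}) adds another $O(I)$ evaluations; multiplying by the per-evaluation cost yields the announced bound.

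Finally I would bound the FAIL set. The algorithm fails only when one of the finitely many conditions imposed throughout the execution is violated: $x$ or $y$ lies in the support of some $W_{-(g-1)o+u_i}$ or of an intermediate $W_{-(g-1)o+s(\mathfrak{w})}$ arising during doubling; some chosen $D^{(i)}$ fails the avoidance conditions; or an auxiliary determinant vanishes for an inessential reason. Each forbidden locus is (the set of $K$-points of) a proper closed subscheme of $J_{\mathcal{C}}$ of degree polynomially bounded in $g$, hence its density is $\le g^{\mathfrak{O}g}/q$ by the Lang--Weil estimate applied to translates of $W$. The number of such loci visited during the doubling phase is $O(I\log|e|)$, one per intermediate partial sum; a union bound gives the claimed density $g^{\mathfrak{O}g}\cdot I\cdot\log(|e|)/q$.

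The main obstacle will be the union-bound bookkeeping in the failure analysis: each intermediate zero-cycle built during the doubling phase shifts the excluded Theta-divisor, and one must argue that all the resulting translates remain in a family of controlled degree so that Lang--Weil applies uniformly. The rest is largely a careful translation of the effective Riemann--Roch cost estimates into the stated asymptotic form.
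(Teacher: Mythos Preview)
The paper does not actually prove this theorem: it is stated immediately after the six-step algorithm as a summary of results from \cite{CouvEz}, with the preceding text explicitly saying ``See \cite[Section 2]{CouvEz} for the details.'' There is no proof in the present paper to compare against.

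That said, your sketch is the natural one and tracks what the algorithm description already suggests. The correctness argument you outline (comparing the divisor of $\prod_i(\delta_x^{(i)}/\delta_y^{(i)})^{e_i}$ and of $\alpha[h](x)/\alpha[h](y)$ against the target divisor in~(\ref{DefEta})) is indeed the crux, and the use of~(\ref{EqAdd}) with binary exponentiation on each $e_i$ is exactly what the theorem statement itself points to. Your failure-locus analysis via Lang--Weil on translates of $W$ with a union bound over the $O(I\log|e|)$ intermediate cycles is also the expected shape; note only that one should be slightly careful that the auxiliary points $\phi_u,\phi_y$ and the choices of $D^{(i)}$ in step~1 can themselves be made deterministically (e.g.\ by enumeration) so that the algorithm remains deterministic as claimed, and that the non-simple-$D_x$ branch does not introduce randomness either. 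None of this is a gap in your plan, just bookkeeping you have correctly flagged as ``the main obstacle.''
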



\paragraph{Fast multiple evaluation.}
For our applications, we need to evaluate $\eta[\frak{u},y]$ at many random points $x$ of the Jacobian to do linear algebra. So we could ask if there is some redundant computation. Obviously the values $\delta_x^{(i)}$ and $\alpha[h][x]$ are not the same for different points $x$, but what about $\delta_y^{(i)}$ and $\alpha[h][y]$ ?
The divisors $D^{(i)}$ at Step $1$ depend on $x$ so that $h$, the bases $f^{(i)}$ and thus $\delta_y^{(i)}$ and $\alpha[h][y]$ depend also on $x$. So there is no
redundant computation.
But if we do not consider the dependency of the $D^{(i)}$ on $x$ and take $D^{(i)}=U_i-(g-1)O+\Omega$ (for example)
aand we assume that the condition on $y$ does not interfere, then
$h$, the basis $f^{(i)}$, the values $\delta_y^{(i)}$ and $\alpha[h](y)$ can be computed once and for all in a precomputation step.
It remains to do Steps $4$, $5$ and $6$ for each $x$ and the points where the computation does not work are simply discarded.
The main advantage of this method is that the effective Riemann--Roch algorithms are only executed one time.
The random $x$ can be obtained by taking $g$ random points of the curve $\mathcal{C}$ so that we directly have the points $X_i$.

\subsection{Evaluation of $\eta_f[\frak{u},y]$} \label{subsec:etaf}

In the preceding subsection, we have defined functions on $J_{\mathcal{C}}$. Let $\mathcal{V}\subset J_{\mathcal{C}}[\ell]$ be a maximal isotropic subgroup for the $\ell$-Weil pairing.
We now introduce functions on $J_{\mathcal{C}}$ invariants by $\mathcal{V}$. This is \cite[Sections 4 and 5]{CouvEz}. 

Let $f:J_{\mathcal{C}}\to J_{\mathcal{C}}/\mathcal{V}$ be an $(\ell,\ldots,\ell)$-isogeny.
Let $\mathcal{L}=\mathcal{O}_{J_{\mathcal{C}}}(\ell W_{-\theta})$. According to \cite[Section 5]{CouvEz}, there exists a symmetric
principal polarization $\mathcal{M}$ on $J_{\mathcal{C}}/\mathcal{V}$ which satisfies $\mathcal{L}=f^*\mathcal{M}$. 
As $h^0(\mathcal{M})=1$, there exists an effective divisor $\mathcal{Y}$ on $J_{\mathcal{C}}/\mathcal{V}$ associated to $\mathcal{M}$.
The divisor $\mathcal{X}=f^*\mathcal{Y}$
is effective, linearly equivalent to $\ell W_{-\theta}$ and invariant under $\mathcal{V}$ (acting by translation).

We are interested in the function $\eta_{\mathcal{X}}[\frak{u},y]$ (see Equation (\ref{DefEta})) for some zero-cycle $\frak{u}=\sum_{1\le i\le I} e_i[u_i]$ in $J_{\mathcal{C}}$ and $y\in J_{\mathcal{C}}(K)$
with the usual restrictions.
Take $v_i=f(u_i)$ and let $\frak{v}=\sum_{1\le i \le I}e_i[v_i]$ be a zero-cycle on $J_{\mathcal{C}}/\mathcal{V}$. Consider the function
$\eta_{\mathcal{Y}}[\frak{v},f(y)]$ on $J_{\mathcal{C}}/\mathcal{V}$ having $\sum_{1\le i\le I}e_i\mathcal{Y}_{v_i}-\mathcal{Y}_{s(\frak{v})}-(\deg(\frak{v})-1)\mathcal{Y}$ as divisor
and taking value $1$ at $f(y)$.
Then $\eta_{\mathcal{Y}}[\frak{v},f(y)]\circ f$ is equal to $\eta_{\mathcal{X}}[\frak{u},y]$. So $\eta_{\mathcal{X}}[\frak{u},y]$ is invariant by $\mathcal{V}$ and can
be identified with $\eta_{\mathcal{Y}}[\frak{v},f(y)]$. This allows us to work on $J_{\mathcal{C}}/\mathcal{V}$
while staying in $J_{\mathcal{C}}$.
A point $z$ in $J_{\mathcal{C}}/\mathcal{V}$ is seen as a point $x$ in $J_{\mathcal{C}}$ such that $f(x)=z$.
To insist on the fact that $\eta_{\mathcal{X}}[\frak{u},y]$ is related to the isogeny, we name this function $\eta_f[\frak{u},y]$.

We want now to evaluate the function $\eta_f[\frak{u},y]$ at $x$. The trick consists in constructing a function $\Phi_{\mathcal{V}}$ having $\mathcal{X}-\ell W_{-\theta}$ as
divisor. Indeed, assuming that $s(\frak{u})=0$ and $\deg(\frak{u})=0$, the divisor of $\eta_f[\frak{u},y]$ is $\sum_{i=1}^Ie_i\mathcal{X}_{u_i}$ while the divisor
of $\prod_{1\le i \le I}\Phi_{\mathcal{V}}(x-u_i)^{e_i}$ is $\sum_{i=1}^Ie_i(\mathcal{X}_{u_i}-\ell W_{-\theta+u_i})$. To compensate, consider the function
$(\eta[\frak{u}](x+\vartheta))^\ell$ which has divisor $\ell\sum_{i=1}^Ie_iW_{-(g-1)o-\vartheta+u_i}=\ell\sum_{i=1}^Ie_iW_{-\theta+u_i}$ because
$\vartheta=\theta-(g-1)o$. Thus, looking at the evaluation at $y$, we deduce 
\begin{equation}\label{eq:defetax}
  \eta_f[\frak{u},y](x)=(\eta[\frak{u},y+\vartheta](x+\vartheta))^\ell\cdot\prod_{1\le i\le I}(\Phi_{\mathcal{V}}(x-u_i))^{e_i}\cdot \prod_{1\le i\le I}(\Phi_{\mathcal{V}}(y-u_i))^{-e_i}.
\end{equation}

The construction and computation of $\Phi_{\mathcal{V}}$ are as follows. For any $w\in\mathcal{V}$, define $w':=\frac{\ell+1}2\cdot w$ ($\ell$ must be odd).
Fix $\phi_u,\phi_y\in J_{\mathcal{C}}(K)$ and consider the functions 
\[\theta_w(x)=\eta[\ell[w']-\ell[0],w'-x+\vartheta](x-w'+\vartheta),\]
\[\tau[\phi_u,\phi_y](x)=\eta[[(\ell-1)\phi_u]+(\ell-1)[-\phi_u]-\ell[0],\phi_y+\vartheta](x+\vartheta),\]
and \[a_w(x)=\theta_w(x)\cdot \tau[\phi_u,\phi_y](x-w).\]
Then we can define $\Phi_{\mathcal{V}}$ as  \[\Phi_{\mathcal{V}}(x)=\sum_{w\in\mathcal{V}}a_w(x).\]
This is also equal to $\sum_i\tr_{L_i/K}(a_{w_i}(x))$ if the subgroup $\mathcal{V}$ is given by a collection of fields
extensions $(L_i/K)$ and points $w_i\in\mathcal{V}(L_i)$ such that $\mathcal{V}$
is the disjoint union of 
the sets containing $w_i$ and all its conjugates.

As $\#\mathcal{V}=\ell^g$, the number of calls to the $\eta$ function to compute $\eta_f$ is bounded by
$1+4\cdot I\cdot \ell^g$.

\begin{theo}\label{theo:evaletaf} There exists a deterministic algorithm that takes as input
  \begin{itemize}
  \item  a finite field $K$ with characteristic $p$ and cardinality $q$;
  \item a curve $\mathcal{C}$ of genus $g\ge 2$ over $K$;
  \item a zero-cycle $\frak{u}=\sum_{1\le i\le I}e_i[u_i]$ in the Jacobian $J_{\mathcal{C}}$ of $\mathcal{C}$ such that
    $u_i\in J_{\mathcal{C}}(K)$ for every $1\le i\le I$, $\deg(\frak{u})=0$ and $s(\frak{u})$;
  \item a theta characteristic $\theta$ defined over $K$;
  \item an odd prime number $\ell\ne p$;
  \item a maximal isotropic $K$-subgroup scheme $\mathcal{V}\subset J_{\mathcal{C}}[\ell]$;
  \item two classes $x$ and $y$ in $J_{\mathcal{C}}(K)$ such that $y\not\in (\bigcup_iW_{-\theta+u_i})\cup(\bigcup_i\mathcal{X}_{u_i})$.   
  \end{itemize}
  The algorithm returns \emph{FAIL} or $\eta_f[\frak{u},y](x)$ in time $I\cdot (\log{|e|})\cdot g^{\frak{O}}\cdot (\log{q})^{1+\frak{e}(q)}\cdot \ell^{g(1+\frak{e}(\ell^g))}$,
  where $|e|=\sum_{1\le i\le I}|e_i|$. For given $K$, $\mathcal{C}$, $\frak{u}$, $\theta$, $\mathcal{V}$, there exists a subset
  $\emph{FAIL}(K,\mathcal{C},\frak{u},\theta,\mathcal{V}$) of $J_{\mathcal{C}}(K)$ with density
  $\le I\cdot (\log{|e|})\cdot g^{\frak{O}g}\cdot \ell^{g^2}\cdot (\log{\ell})/q$ and such that the algorithm succeeds whenever none of $x$ and $y$ belongs
  to this subset.
\end{theo}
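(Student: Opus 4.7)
The plan is to turn the construction already described in Subsection \ref{subsec:etaf} into an algorithm and analyze its cost and failure set by applying the previous theorem on evaluation of $\eta$ to each of the intermediate $\eta$-calls.

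First I would count the number of $\eta$-evaluations. By equation (\ref{eq:defetax}), the output factors as $(\eta[\frak{u},y+\vartheta](x+\vartheta))^\ell$ times $\prod_{i}\Phi_{\mathcal V}(x-u_i)^{e_i}\Phi_{\mathcal V}(y-u_i)^{-e_i}$. The first factor is a single call to the $\eta$-algorithm on the zero-cycle $\frak{u}$ with total weight $|e|$, costing $I\cdot(\log|e|)\cdot g^{\frak{O}}\cdot(\log q)^{1+\frak{e}(q)}$ (fast exponentiation via (\ref{EqAdd})). For the second factor I would expand $\Phi_{\mathcal V}(z)=\sum_{w\in\mathcal V}a_w(z)$, each $a_w=\theta_w\cdot\tau[\phi_u,\phi_y]$ needing a bounded number of $\eta$-evaluations on zero-cycles of weight $O(\ell)$, so each $\Phi_{\mathcal V}$ costs $\ell^g\cdot(\log\ell)\cdot g^{\frak{O}}\cdot(\log q)^{1+\frak{e}(q)}$. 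Since we need $2I$ such values, the total cost is $I\cdot(\log|e|)\cdot g^{\frak{O}}\cdot(\log q)^{1+\frak{e}(q)}\cdot \ell^{g(1+\frak{e}(\ell^g))}$, as claimed.

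When $\mathcal V$ is only given by a collection of orbits $w_i\in\mathcal V(L_i)$, I would replace $\sum_{w\in\mathcal V}a_w(x)$ by $\sum_i\tr_{L_i/K}(a_{w_i}(x))$; since $\sum_i[L_i:K]=\ell^g$, the number of $\eta$-calls (now over the larger fields $L_i$) is the same up to factors absorbed into the exponent $1+\frak{e}(\ell^g)$ on $\ell^g$. For the failure set I would apply the failure clause of the $\eta$-theorem to each of the $O(I\cdot\ell^g)$ auxiliary evaluation points — namely $x+\vartheta$, $y+\vartheta$ and all translates $x-u_i-w$, $y-u_i-w$ for $w\in\mathcal V$ — and union-bound the resulting forbidden subsets. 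Each has density $\le g^{\frak{O}g}\cdot I\cdot(\log|e|)/q$, so the total density is $\le I\cdot(\log|e|)\cdot g^{\frak{O}g}\cdot\ell^{g^2}\cdot(\log\ell)/q$, which is the bound stated.

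The main obstacle is the last matching step: one has to verify that the hypothesis $y\notin(\bigcup_i W_{-\theta+u_i})\cup(\bigcup_i\mathcal{X}_{u_i})$ (and the symmetric one on $x$) really does rule out landing on any of the supports that appear in the intermediate $\eta$-evaluations inside $\theta_w$ and $\tau[\phi_u,\phi_y]$. This amounts to tracking which translates of $W_{-(g-1)o}$ occur in the divisor of each auxiliary function, then using the symmetry identity (\ref{eq:varthetasym}) together with the $\mathcal V$-invariance of $\mathcal X$ to rewrite those translates as translates of the two explicit divisors excluded in the hypothesis. Once this is done, the failure loci for all $\eta$-calls are contained in explicit translates of $\bigcup_i W_{-\theta+u_i}\cup\bigcup_i\mathcal X_{u_i}$, and the union bound from the preceding paragraph yields the advertised $\emph{FAIL}(K,\mathcal{C},\frak{u},\theta,\mathcal{V})$ set.
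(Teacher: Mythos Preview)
Your proposal is correct and follows the same approach as the paper. The paper itself does not give a detailed proof of this theorem; it is recalled from \cite{CouvEz}, and the algorithm is precisely the construction you use (equation~(\ref{eq:defetax}) together with the $\Phi_{\mathcal V}$ function). The paper's only explicit contribution to the complexity analysis is the sentence just before the theorem: ``the number of calls to the $\eta$ function to compute $\eta_f$ is bounded by $1+4\cdot I\cdot\ell^g$'', which is the same count you obtain.

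One small clarification on your last paragraph: the hypothesis $y\notin(\bigcup_i W_{-\theta+u_i})\cup(\bigcup_i\mathcal{X}_{u_i})$ is there so that the normalization $\eta_f[\frak{u},y](y)=1$ makes sense (i.e.\ $y$ is not in the support of the divisor of $\eta_f[\frak{u}]$), not to control the FAIL set. The FAIL set comes from a separate mechanism --- the internal choices made by the fast-exponentiation $\eta$-algorithm of the previous theorem at each of the $O(I\ell^g)$ auxiliary points --- and is handled exactly by the union bound you describe. So the ``main obstacle'' you identify is not really an obstacle: the two conditions serve different purposes and do not need to be matched against each other.
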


\paragraph{Fast multiple evaluation.}

We need to evaluate $\eta_f[\frak{u},y]$ at many random points $x$ and we ask if there is some redundant computation.
We use the same idea as for the $\eta[\frak{u},y]$ functions in the previous subsection to minimize the number of times
we use effective Riemann--Roch algorithms.
Thus
\begin{itemize}
\item The product $\prod_{1\le i \le I}(\Phi_{\mathcal{V}}(y-u_i))^{-e_i}$ does not depend on $x$ anymore;
\item We take the same $\phi_u$ and $\phi_y$ in $J_{\mathcal{C}}$ for all $x$;
\item Then we can do the fast multiple evaluation of the $\eta$ functions $\tau[\phi_u,\phi_y]$, $\eta[\frak{u},y+\vartheta]$,
  and $\theta_w$ for all $w\in\mathcal{V}$.
\end{itemize}

\section{Geometry of the Kummer variety}\label{sec:kumm}

We assume $char(K)\ne 2$  
and $K$ algebraically closed.

Let $a\in J_{\mathcal{C}}[2]$ and $y\in J_{\mathcal{C}}$. The function $\eta_{W_{-\theta}}[2[a]-2[0],y]$ 
whose divisor is $2(W_{-\theta+a}-W_{-\theta})$
is said to be a \emph{level $2$  function}. The level $2$  functions generate the space 
of functions  $H^0(J_{\mathcal{C}},\mathcal{O}_{J_{\mathcal{C}}}(2W_{-\theta}))$, which is of dimension $2^g$.
Let  $\eta_1,\ldots,\eta_{2^g}$ be a basis of this space consisting in level $2$ functions.
The map $\phi=(\eta_1:\ldots:\eta_{2^g}):J_{\mathcal{C}}\to\mathbb{P}^{2^g-1}$
factors through the projection $J_{\mathcal{C}}\to J_{\mathcal{C}}/\langle \pm 1\rangle$ and a
morphism $J_{\mathcal{C}}/\langle \pm 1\rangle\to\mathbb{P}^{2^g-1}$, which is a closed embedding (\cite[Proposition 2.3]{DolgLeh}).
The \emph{Kummer variety} of $J_{\mathcal{C}}$ is $J_{\mathcal{C}}/\langle \pm 1\rangle$. We identify it with its image in $\mathbb{P}^{2^g-1}$.
According to \cite[Proposition 3.1]{Muller}, the Kummer variety can always be described by an intersection of quartics.
We consider the following.



\begin{itemize}
\item If $g=2$,  $1$ quartic is enough;
\item If $g=3$ and the curve is hyperelliptic,  $1$ quadric and $34$ quartics are needed (\cite[Theorem 2.5]{Stoll} extending \cite[Theorem 3.3]{Muller})
  (in our case, we have always computed a lot of equations and reduced them in computing a Gröbner basis and this yielded $1$ quadric and $35$ quartics);
\item If $g=3$ and the curve is  non-hyperelliptic, it is possible, instead of quartics, to describe the Kummer variety by cubics equations (\cite[Theorem 7.5]{brambila}).
  We have always found $8$ cubics equations.
\end{itemize}  

These equations can be computed in evaluating $\eta_1$, $\ldots$, $\eta_{2^g}$
at random points and then doing linear algebra. In genus $2$, the basis is of cardinality $4$ and a quartic has at most $35$ coefficients, so that
the number of evaluations is at least $35\times 4$. The number of coefficients is $330$ in genus $3$ for quartics and $36$ for quadrics 
implying at least $330\times 8$ evaluations because the basis has $8$ elements,
but in the non-hyperelliptic case, as the Kummer variety can be described by cubics, $120\times 8$ evaluations are needed.
(Recall that the number of monomials of degree $d$ with $v$ variables is ${v+d-1}\choose{d}$).

\begin{remm}Having these equations help the computation of the isogeny but they are not necessary. 
Computing them does not impact the complexity of the algorithms but have a huge impact on the practical computations (see next subsection).
\end{remm}

\begin{remm} In this section we are describing  the Kummer variety of a curve $\mathcal{C}$.
  Then to compute the isogeny we choose a basis of $\eta_f$ functions so that we end up with the Kummer variety of the isogenous curve~$\mathcal{D}$.
\end{remm}

Using linear algebra, it is possible to write  $\eta_{W_{-\theta}}[2[a]-2[0],y]$ as a linear combination of the basis: we have
$\eta_{W_{-\theta}}[2[a]-2[0],y]=\sum_{i=1}^{2^g}c_i\eta_i$ with $c_i\in K$.
Call $Z_1,\ldots,Z_{2^g}$ the projective coordinates associated to the basis. 
This gives an equation $Z_a=\sum_{i=1}^{2^g}c_iZ_{i}$ and the equation $Z_a=0$ is the image of $W_{-\theta+a}$ in the Kummer variety seen in $\mathbb{P}^{2^g-1}$.
\begin{deff}
  The hyperplanes $Z_a=0$ for $a\in J_{\mathcal{C}}[2]$ are called \emph{singular planes} or \emph{tropes}.
The image of the $2$-torsion points in $\mathbb{P}^{2^g-1}$ are called \emph{singular points} or \emph{nodes}.
  \end{deff}

The set of tropes with the set of nodes form a configuration.

\begin{deff}
  A $(m,n)$-configuration in $\mathbb{P}^N$ is the data of $m$ hyperplanes and $m$ points such that each hyperplane contains $n$ points and each point is contained in $n$ hyperplanes.
\end{deff}


The configuration can be described through a symplectic basis of the $2$-torsion: let $e_1$, \ldots, $e_g$, $f_1$, \ldots, $f_g$ be such a basis. We represent an  element
$a=\epsilon_1e_1+\ldots+\epsilon_ge_g+\rho_1f_1+\ldots+\rho_g f_g$ by the matrix
$\begin{ppsmallmatrix}\epsilon_1&\ldots&\epsilon_g\\\rho_1&\ldots&\rho_g\end{ppsmallmatrix}$ and we define its \emph{characteristic} as $\sum_{i=1}^g\epsilon_i\rho_i$.

  Kummer surfaces have been thoroughly studied (see \cite[Section 10.2]{Birk} for example when $K=\mathbb{C}$) and their
 $(16,6)$-configuration is a corollary of the following proposition.
  \begin{prop}\label{prop:confG2} 
    Let $\mathcal{C}$ be a genus~$2$ curve.
    There is a $2$-torsion point $a_0$ such that for any $a'=\begin{ppsmallmatrix} \epsilon_{11}&\epsilon_{12}\\ \rho_{11}&\rho_{12} \end{ppsmallmatrix}$  and
  $a''=\begin{ppsmallmatrix} \epsilon_{21}&\epsilon_{22}\\ \rho_{21}&\rho_{22} \end{ppsmallmatrix}$ in $J_{\mathcal{C}}[2]$, the following conditions are equivalent
\begin{itemize} 
\item the image of $a'$ in $\mathbb{P}^3$ 
  is contained in the trope $Z_{a_0+a''}$; 
\item either $((\epsilon_{11},\rho_{11})=(\epsilon_{21},\rho_{21})$ and $(\epsilon_{12},\rho_{12})\ne(\epsilon_{22},\rho_{22}))$ or $((\epsilon_{11},\rho_{11})\ne (\epsilon_{21},\rho_{21})$ and $(\epsilon_{12},\rho_{12})=(\epsilon_{22},\rho_{22}))$.
  \end{itemize}
\end{prop}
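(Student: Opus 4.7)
\emph{Proof plan.} The strategy is to translate the incidence in the proposition into the language of theta characteristics and quadratic forms on $J_{\mathcal{C}}[2]$, and then to recognise the combinatorial condition on the right-hand side as the value of an explicit quadratic form evaluated at $b:=a'+a''=a'-a''$.

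First I would unpack the incidence. By the very definition of the tropes given just above the proposition, the hyperplane $Z_{a_0+a''}=0$ is the image in $\mathbb{P}^3$ of the divisor $W_{-\theta+a_0+a''}\subset J_{\mathcal{C}}$, so the image of $a'$ in the Kummer surface lies on $Z_{a_0+a''}$ if and only if $a'\in W_{-\theta+a_0+a''}$, i.e.\ $\xi:=\theta+(a'-a'')-a_0\in W\subset\Pic^1(\mathcal{C})$. Since $2\xi=2\theta=\omega$, $\xi$ is itself a theta characteristic; and in genus~$2$, $W$ is the image of $\mathcal{C}$ in $\Pic^1(\mathcal{C})$, so being in $W$ amounts to being the class of a single point of $\mathcal{C}$, i.e.\ to being one of the six \emph{odd} theta characteristics (the Weierstrass classes).

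Next I would invoke the standard dictionary between theta characteristics and quadratic forms: each theta characteristic $\tau$ gives a quadratic form $q_\tau\colon J_{\mathcal{C}}[2]\to\mathbb{F}_2$ whose associated bilinear form is the Weil pairing, with the cocycle rule $q_{\tau+c}(b)=q_\tau(b)+\langle c,b\rangle$, and whose Arf invariant records the parity of $\tau$; moreover, when $\tau$ is even, $\tau+b$ is odd iff $q_\tau(b)=1$. In the given symplectic basis a ``standard'' even theta characteristic $\vartheta_0$ yields $q_0(b)=\epsilon_1\rho_1+\epsilon_2\rho_2\pmod 2$. Writing $b=a'+a''=\epsilon_1 e_1+\rho_1 f_1+\epsilon_2 e_2+\rho_2 f_2$, the combinatorial condition in the proposition is equivalent to ``$b$ is a nonzero element of $\langle e_1,f_1\rangle\cup\langle e_2,f_2\rangle$'', and a short $\mathbb{F}_2$ computation of the characteristic function of this set yields
\[
f(b)=(\epsilon_1\rho_1+\epsilon_2\rho_2)+(\epsilon_1+\rho_1+\epsilon_2+\rho_2)=q_0(b)+\langle c_0,b\rangle,\qquad c_0:=e_1+f_1+e_2+f_2,
\]
which is a quadratic form with the Weil pairing as its bilinear form and Arf invariant $q_0(c_0)=0$, hence of even type (and indeed taking the value $1$ on exactly the $6$ elements matching the $(16,6)$-configuration).

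Finally I would set $a_0:=\theta-\vartheta_0-c_0\in J_{\mathcal{C}}[2]$. Then $\theta-a_0=\vartheta_0+c_0$ is an even theta characteristic with $q_{\theta-a_0}=q_0+\langle c_0,\cdot\rangle=f$, so the incidence ``$\theta-a_0+b$ is odd'' reduces to ``$f(b)=1$'', which is precisely the combinatorial condition stated. The main obstacle is the bookkeeping between the algebraic translates $W_{-\theta+\cdot}$ and the analytic quadratic-form formalism (in particular, identifying ``$\xi\in W$'' with ``$\xi$ odd'', which uses Riemann--Roch and the genus-$2$ fact that the odd theta characteristics are exactly the Weierstrass classes); once this dictionary is fixed, the remainder is a short $\mathbb{F}_2$-linear-algebra computation.
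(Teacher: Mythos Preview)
Your argument is correct. The key reduction---that $a'$ lies on $Z_{a_0+a''}$ iff the theta characteristic $(\theta-a_0)+b$ (with $b=a'+a''$) lies in $W$, i.e.\ is odd---is sound in genus~$2$, and your identification of the combinatorial condition as the value set of the even quadratic form $q_0(b)+\langle c_0,b\rangle$ with $c_0=e_1+f_1+e_2+f_2$ is a clean $\mathbb{F}_2$-computation; the choice $a_0=\theta-\vartheta_0-c_0$ then closes the loop.

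The paper takes a different but closely related route: rather than invoking the theta-characteristic/quadratic-form dictionary directly, it transports Birkenhake--Lange's complex-analytic proof to arbitrary $K$ using Mumford's sign function $e_*^{\mathcal{L}}$ for symmetric line bundles and the commutator pairing $e^{\mathcal{L}^2}$, choosing a translate $\mathcal{L}_0$ of the principal polarisation with $e_*^{\mathcal{L}_0}(e_i)=e_*^{\mathcal{L}_0}(f_i)=1$ and then arguing as in \cite[Prop.~10.2.5]{Birk}. The two approaches are equivalent at the level of objects (your $q_\tau$ is Mumford's $e_*^{\mathcal{L}}$ written additively), but yours is self-contained and gives $a_0$ explicitly, whereas the paper's version is shorter but relies on an external reference. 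Your shift $c_0=\begin{ppsmallmatrix}1&1\\1&1\end{ppsmallmatrix}$ is exactly the one the paper records in the remark immediately following the proposition.
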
  
  \begin{proof}    
    First note that it is easy to prove that there is a $(16,6)$-configuration. A genus $2$ curve has $6$
  Weierstrass points $r_1$, \ldots, $r_6$ from which we deduce the $16$ $2$-torsion points. For $i\in\{1,\ldots,6\}$ and $a\in J_{\mathcal{C}}[2]$, the 
  $r_i-\theta+a$ are the only $2$-torsion points in $W_{-\theta+a}$, where we identify the points in $\mathcal{C}$ with the points in $\Pic^1$.
  Moreover, the $2$-torsion point $r_j-\theta+a$ is in $W_{-\theta+a+(r_i-\theta)+(r_j-\theta)}$ for $i\in\{1,\ldots,6\}$.
  
  This proposition is exactly \cite[Proposition 10.2.5]{Birk} where $K=\mathbb{C}$. We generalize the proof of this reference, with the tools of \cite[Section 2]{MumEqAVI}
  (in particular, the properties of $e_*^{\mathcal{L}}$ page $304$, Proposition~$2$ and Corollary~$1$).

  Let $\mathcal{L}$ be the symmetric line bundle defining the principal polarization and $e_1$, $e_2$, $f_1$, $f_2$ the symplectic basis of the $2$-torsion (for the Weil pairing).
  Let $V_1=\langle e_1,e_2 \rangle$, $V_2=\langle f_1,f_2 \rangle$.
  We look for a symmetric line bundle $\mathcal{L}_0$ which is a translate of $\mathcal{L}$ such that $e_*^{\mathcal{L}_0}(x)=1$ for $x\in\{e_1,e_2,f_1,f_2\}$.
  This line bundle exists because:
  \begin{itemize}
  \item There are $2^4=16$ values possible for $(e_*^{\mathcal{L}_0}(x))_{x\in\{e_1,e_2,f_1,f_2\}}$ and there also are $16$ $2$-torsions points.
  \item Each $e_*^{t_a^*\mathcal{L}}$ is different thanks to the non-degeneracy of the commutator pairing $e^{\mathcal{L}^2}$ (\cite[Theorem 1]{MumEqAVI}). See also Section \ref{subsec:alg} for the definition of this pairing.
  \item We have  $\mathcal{L}_0=\mathcal{L}\otimes\mathcal{M}$, where $\mathcal{M}=t_a^*\mathcal{L}\otimes\mathcal{L}^{-1}$ for some $a\in J_{\mathcal{C}}[2]$, from which we deduce
  that $e^{\mathcal{L}^2}=e^{\mathcal{L}_0^2}$ because $M\in \mathrm{Pic}^0(J_{\mathcal{C}})$. This pairing is the Weil pairing associated to the principal polarization.
  \end{itemize}  
  Then for $x=v_1+v_2\in J_{\mathcal{C}}[2]$, $v_i\in V_i$, we have that $e_*^{\mathcal{L}_0}(x)=e_*^{\mathcal{L}_0}(v_1+v_2)=e^{\mathcal{L}_0^2}(v_1,v_2)$.
  Thus $e_*^{\mathcal{L}_0}$ satisfies \cite[Equation (3) page 47]{Birk} on $2$-torsion points.
  We also have that $e_*^{\mathcal{L}}(x)=(-1)^{m(x)-m(0)}$, which is similar to \cite[Proposition 4.7.2]{Birk}.
  So we can do the same proof of \cite[Proposition 10.2.5]{Birk} with $e_*^{\mathcal{L}_0}$ instead of $\chi_0$.  
\end{proof}

  \begin{remm}
    In the analytic theta function theory (see Section \ref{sec:compatibility}), we have that the six theta constants having odd characteristic
    are equal to $0$. Here, note that the image of a $2$-torsion point $a'$ is in the trope $Z_{a_0+a''}$ for some $a''$ when the characteristic of
    $a'+a''+\begin{ppsmallmatrix}1&1\\1&1\end{ppsmallmatrix}$ is odd.
    The shift by $\begin{ppsmallmatrix}1&1\\1&1\end{ppsmallmatrix}$ comes from an arbitrary choice of
      a line bundle in the proof of this proposition  in \cite{Birk}. Another choice would give another description of the configuration. 
  \end{remm}

\begin{remm}
  Note that the trope $Z_a$ for $a=0$ contains the image of the six $2$-torsion points $a_i=r_i-\theta$.
  This trope correspond to the case $a''=a_0$.
  Knowing the matrices associated to these six points (for a fixed symplectic basis),
  we can compute  $a_0$ using the second point of the proposition.
  Moreover, we deduce that $a_0\not\in\{r_i-\theta\}_{i\in\{1,\ldots,6\}}$.
  See also Remark \ref{rem:calcula0} for the computation of $a_0$. 
\end{remm}

\begin{coro}\label{cor:inter2} Any two different tropes have exactly two nodes in common.\end{coro}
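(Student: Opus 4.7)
The plan is to apply Proposition \ref{prop:confG2} directly to compute the intersection. Fix two distinct tropes $Z_{a_0+a''_1}$ and $Z_{a_0+a''_2}$ with $a''_1 \neq a''_2$. Writing each $a''_i$ as a $2\times 2$ matrix over $\mathbb{F}_2$ with columns $c_{i1},c_{i2}\in\mathbb{F}_2^2$, the proposition says that a node (image of a $2$-torsion point $a'$ with columns $d_1,d_2$) lies on the trope $Z_{a_0+a''_i}$ if and only if exactly one of the column equalities $d_1=c_{i1}$ and $d_2=c_{i2}$ holds. Counting common nodes thus becomes a finite combinatorial check on $(\mathbb{F}_2^2)^2$.

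I would split into three cases according to how $a''_1$ and $a''_2$ differ (the remaining case, agreement in both columns, is excluded by $a''_1\neq a''_2$). If $c_{11}=c_{21}$ and $c_{12}\neq c_{22}$, the two incidence conditions jointly force $d_1=c_{11}$ and $d_2\notin\{c_{12},c_{22}\}$, giving exactly $4-2=2$ nodes. The symmetric case $c_{11}\neq c_{21}$, $c_{12}=c_{22}$ gives $2$ by the same argument. In the remaining case $c_{11}\neq c_{21}$ and $c_{12}\neq c_{22}$, combining the four possibilities from the two ``exactly one'' conditions leaves only the two candidates $(c_{11},c_{22})$ and $(c_{21},c_{12})$, which are distinct (a coincidence would force $c_{11}=c_{21}$) and both satisfy both incidences, again giving $2$.

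As a consistency check I would note the double count: each of the $16$ nodes lies in $6$ tropes and hence in $\binom{6}{2}=15$ unordered pairs of tropes, so the number of triples (pair of distinct tropes, common node) is $16\cdot 15=240$; dividing by $\binom{16}{2}=120$ pairs of tropes gives an average of $2$ common nodes per pair, consistent with the case analysis and actually forcing the count to be constantly $2$ once one knows the configuration is homogeneous under the natural $J_\mathcal{C}[2]$-action. The only mild pitfall is being careful, in the ``disagree in both columns'' case, not to double count the two solutions; beyond that the argument is a direct unwinding of Proposition \ref{prop:confG2}.
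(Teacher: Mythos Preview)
Your proof is correct and follows essentially the same approach the paper indicates: the paper simply says the corollary is a quick consequence of Proposition~\ref{prop:confG2} and refers to \cite[Corollary~10.2.8]{Birk}, and your column-by-column case analysis is exactly the combinatorial unwinding of that proposition one would find there. One small caveat: in your consistency check, the natural $J_{\mathcal{C}}[2]$-action on pairs of tropes is by simultaneous translation and hence is \emph{not} transitive (orbits are indexed by the nonzero difference $a''_1-a''_2$), so the averaging argument alone does not force constancy; but this is harmless, since your main case analysis already establishes the result directly.
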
 
\begin{proof} This is a quick consequence of the previous proposition. See \cite[Corollary 10.2.8]{Birk} (when $K=\mathbb{C}$ but the proof in general is the same).
\end{proof} 

In genus $3$, there is a $(64,28)$-configuration for hyperelliptic and non-hyperelliptic curves and for the former, the configuration can be extended to
a $(64,29)$-configuration.

\begin{prop}\label{prop:confG3} Let $\mathcal{C}$ be a genus $3$ curve. There exists a $2$-torsion point $a_0$ such that for all
  $a'=\begin{ppsmallmatrix} \epsilon_{11}&\epsilon_{12}&\epsilon_{13}\\ \rho_{11}&\rho_{12}&\rho_{13} \end{ppsmallmatrix}$  and
$a''=\begin{ppsmallmatrix} \epsilon_{21}&\epsilon_{22}&\epsilon_{23}\\ \rho_{21}&\rho_{22}&\rho_{23} \end{ppsmallmatrix}$ in $J_{\mathcal{C}}[2]$,
the image of the point $a'$ in $\mathbb{P}^7$ is contained in $Z_{a_0+a''}$ if and only if one of the following conditions is satisfied
\begin{itemize}
\item $a'=a''$;
\item $(\epsilon_{11},\rho_{11})=(\epsilon_{21},\rho_{21})$ and $(\epsilon_{12},\rho_{12})\ne (\epsilon_{22},\rho_{22})$ and $(\epsilon_{13},\rho_{13})\ne (\epsilon_{23},\rho_{23})$;
\item $(\epsilon_{11},\rho_{11})\ne (\epsilon_{21},\rho_{21})$ and $(\epsilon_{12},\rho_{12})= (\epsilon_{22},\rho_{22})$ and $(\epsilon_{13},\rho_{13})\ne (\epsilon_{23},\rho_{23})$;
\item $(\epsilon_{11},\rho_{11})\ne (\epsilon_{21},\rho_{21})$ and $(\epsilon_{12},\rho_{12})\ne (\epsilon_{22},\rho_{22})$ and $(\epsilon_{13},\rho_{13})= (\epsilon_{23},\rho_{23})$;
\item $a'=a''+a_0+2r-\theta$ and $\mathcal{C}$ is hyperelliptic, where $r$ is any Weierstrass point, seen in $\Pic^1(\mathcal{C})$.
\end{itemize}
\end{prop}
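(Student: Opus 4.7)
The plan is to extend the proof of Proposition~\ref{prop:confG2}, which relies on Mumford's sign function $e_*^{\mathcal{L}}$, to dimension~$3$, and then to account for the extra incidence of the hyperelliptic case via the hyperelliptic vanishing theta-null.

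First I would fix a symplectic basis $e_1,e_2,e_3,f_1,f_2,f_3$ of $J_{\mathcal{C}}[2]$ for the Weil pairing and set $V_1=\langle e_1,e_2,e_3\rangle$, $V_2=\langle f_1,f_2,f_3\rangle$. The existence argument from the genus~$2$ proof goes through unchanged: there are $2^6=64$ possible sign patterns on the six basis elements and $64$ symmetric translates of the principal symmetric line bundle $\mathcal{L}$, and distinct translates yield distinct sign functions by non-degeneracy of $e^{\mathcal{L}^2}=e^{\mathcal{L}_0^2}$. Hence some $a_0$ gives $\mathcal{L}_0=t_{a_0}^{*}\mathcal{L}$ with $e_*^{\mathcal{L}_0}(x)=1$ for every basis vector~$x$. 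The identity $e_*^{\mathcal{L}_0}(v_1+v_2)=e^{\mathcal{L}_0^2}(v_1,v_2)$ from \cite[Corollary~1, p.~307]{MumEqAVI} then determines $e_*^{\mathcal{L}_0}$ on all of $J_{\mathcal{C}}[2]$ in terms of the characteristic, and the trope-containment condition rewrites, exactly as in \cite[Proposition~10.2.5]{Birk}, as the vanishing of a fixed quadratic form in the blocks $(\epsilon_{ki},\rho_{ki})$. Enumerating block-wise over $i=1,2,3$ yields the first four bullets of the statement: either all three blocks of $a'-a''$ vanish (the new case $a'=a''$) or exactly one of the three blocks vanishes.

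For the hyperelliptic addendum, the key input is that for $\mathcal{C}$ hyperelliptic of genus~$3$ the canonical class is $4r$ for any Weierstrass point $r$, so $a_h:=2r-\theta$ is a $2$-torsion point that does not depend on the choice of $r$ (two Weierstrass points differ by a $2$-torsion point whose double vanishes). Taking $a'=a''+a_0+a_h$, one computes $a'+\theta-a_0-a''=2r$, which is effective of degree~$2$, so $a'\in W_{-\theta+a_0+a''}$, i.e., the node of $a'$ lies on the trope $Z_{a_0+a''}$. Moreover $h^0(2r)=2$ puts $a_h$ in the singular locus of $W_{-\theta}$ by Riemann's singularity theorem; this is the classical hyperelliptic vanishing theta-null. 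To finish I would verify this extra incidence directly, by evaluating $\eta_{W_{-\theta}}[2[a']-2[0],y]$ at this specific $a'$ and checking that it vanishes, and then rule out any further coincidence, so that the configuration is exactly $(64,29)$ in the hyperelliptic case and $(64,28)$ otherwise.

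The main obstacle will be this last hyperelliptic step. The first four bullets are a formal generalisation of the genus~$2$ argument via Mumford's sign-function formalism, whereas the fifth bullet uses a property specific to hyperelliptic Jacobians (the vanishing theta-null at $2r$) together with an exhaustiveness argument to rule out further coincidences. Carrying it out cleanly will likely require either hyperelliptic theta identities or a direct case analysis of the effective representatives of degree-$2$ divisor classes on $\mathcal{C}$.
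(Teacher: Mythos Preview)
Your approach is essentially the paper's: adapt the Mumford $e_*^{\mathcal{L}}$ argument from the genus~$2$ proof to dimension~$3$ (the paper uses the shift by $\begin{ppsmallmatrix}1&1&1\\1&1&1\end{ppsmallmatrix}$), and then handle the extra hyperelliptic incidence via the point $2r-\theta$ with $h^0(2r)=2$, the unique point of even multiplicity on $W_{-\theta}$. For the exhaustiveness you were worried about, the paper simply counts: $W_{-\theta}$ contains exactly $\binom{8}{2}+1=29$ two-torsion points in the hyperelliptic case (pairs of Weierstrass points, plus $2r$) and exactly $28$ in the non-hyperelliptic case (from the bitangents), so no further case analysis is required.
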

\begin{proof}
  We have obtained this result in adapting the proof of \cite[Proposition 10.2.5]{Birk}, using   $\begin{ppsmallmatrix}1&1&1\\1&1&1\end{ppsmallmatrix}$.
    In the hyperelliptic case, the divisor $W_{-\theta}$ contains $29$ points of $2$-torsion
    (coming from the combination of two Weierstrass points among the $8$,  giving us ${{8}\choose{2}}+1=29$)
    against $28$ in the non-hyperelliptic case (coming from the $28$ bitangents, see Section \ref{subsec:bitangent}).
    Among the $29$ points, exactly one is such that the multiplicity of $W_{-\theta}$ at this point is even: this is the point $2r$
    in $W$ and thus $2r-\theta$ is in $W_{-\theta}$ (recall that if $r_1$, $r_2$ are linear classes of divisors in $\Pic^1(\mathcal{C})$
    coming from Weierstrass points, then $2r_1\sim 2r_2$).   
\end{proof}

\begin{remm}   It is well-known that the $28$ analytic theta constants having odd characteristic are equal to $0$ and that a genus $3$ curve is hyperelliptic
  if and only if (exactly) one even theta constant is equal to $0$. 
\end{remm}

\begin{remm} As in the genus $2$ case, we can compute $a_0$ knowing the points
  in $W_{-\theta}$. Note that because of the first condition, we have that  $a_0$ is one of the $2$-torsion points in $W_{-\theta}$.
  \end{remm}

 \section{Computation of the equation of the curve in the hyperelliptic case} \label{sec:hyp}
If the genus of $\mathcal{C}$ is $2$, then the quotient $J_{\mathcal{C}}/\mathcal{V}$ is generically the Jacobian of a   
genus $2$  curve $\mathcal{D}$,
while if it is $3$, this is generically the Jacobian of a genus $3$ curve $\mathcal{D}$, which can be hyperelliptic or non-hyperelliptic (a plane quartic) and the latter
is the generic case.
The aim of this section is to compute a model of $\mathcal{D}$ when $\mathcal{D}$ is hyperelliptic of genus $2$ or $3$ in using the geometry of the Kummer
variety.

We give a general method to achieve this based on \cite{CouvEz}. We optimize it and show that in genus $2$ we can obtain $\mathcal{D}$ in $11$ evaluations
of $\eta_f$ functions. We give a solution in genus $3$ without optimizing the number of evaluations.

\paragraph{Notations.}
Let $\mathcal{C}$ be a hyperelliptic curve of genus $g\in\{2,3\}$ over the algebraic closure (to simplify the exposition) of a finite field $K$
of characteristic $\ne 2$.
We assume that the curve is given by
an \emph{imaginary model}  so that we have $\mathcal{C}:Y^2=h_{\mathcal{C}}(X)$ for $h_{\mathcal{C}}$  of degree $2g+1$ 
and having a unique point at infinity $O$.
Coming back to the notations of Section \ref{subsec:def},  the $K$-point we choose is $O$. Then $2O$ is a canonical divisor and we take
the theta characteristic $\theta=\iota(O)=o$; then $\vartheta=0\in J_{\mathcal{C}}$. We use the $\eta$ and $\eta_f$ functions defined by the divisor $W_{-o}$.
Let $r_1,\ldots,r_{2g+2}$ be the $2g+2$ Weierstrass points of the curve $\mathcal{C}$, where $r_{2g+2}$ is $O$.
By an abuse of notation, we also denote $r_i$ the class of $r_i$ in $\Pic^1(\mathcal{C})$. The $2$-torsion points in $J_{\mathcal{C}}$ for $g=2$ are 
$a_i:=r_i-o$ for $i\in\{1,\ldots, 2g+2\}$ and $a_{ij}:=r_i+r_j-2o$ for $1\le i,j\le 2g+1$ ($i\ne j$). Moreover if $g=3$, we add $a_{ijk}:=r_i+r_j+r_k-3o$ for
$1\le i,j,k \le 2g+1$ (and $i,j,k$ distincts).
Fix $y\in J_{\mathcal{C}}$.

\subsection{Computing the equation of the isogenous curve in genus 2}\label{subsec:isoG2}

\paragraph{The (16,6)-configuration.}
Consider the level $2$ functions $\eta_f[2[a]-2[0],y]$ for all $a\in J_{\mathcal{C}}[2]$.
Looking at divisors, we have for a $2$-torsion point $a\ne a_6$ that $\eta_f[2[a]-2[0],y](x)=0$ for the values $x\in\{a_1+a,\ldots,a_6+a\}$.
Note that the function $\eta_f[2[a_6]-2[0],y]$ is constant according to its divisor; and by definition we have $\eta_f[2[a_6]-2[0],y](y)=1$.
But this function must be equal to $0$ at the closed subvariety $W_{-o}$ of $J_{\mathcal{C}}$ 
and in particular at the six $2$-torsion points $a_1$, $\ldots$, $a_6$ to be coherent with the $(16,6)$-configuration in $\mathbb{P}^3$.

Fix $\eta_1$, \ldots, $\eta_4$ a basis of the level $2$  functions. This defines a map $\phi$ from $J_{\mathcal{C}}$ to the Kummer surface of
the Jacobian $J_{\mathcal{D}}$ of $\mathcal{D}$
seen in $\mathbb{P}^3$ and we
denote by $Z_1$, \ldots, $Z_4$ the
projective coordinates associated to this basis, as already done in the previous section.
For all $a$, write $\eta_f[2[a]-2[0],y]=c_1\eta_1+\ldots+c_4\eta_4$ for $c_k\in K$. Denote then $Z_{a}=c_1Z_1+\ldots+c_4Z_4$ the tropes.
The nodes are the image by $\phi$ of the $2$-torsion points. The $(16,6)$-configuration (of the Kummer surface of $\mathcal{D}$) is described by Table \ref{table:confG2}, where
for each trope we have written the $6$ $2$-torsion points whose images lie in it.

\begin{table}[h]
  \begin{tabular}{|c|cccccc|c|c|cccccc|}\hline
    $Z_{a_1}$  & $a_1$& $a_6$& $a_{12}$& $a_{13}$& $a_{14}$& $a_{15}$& $\qquad$ &$Z_{a_{14}}$& $a_1$& $a_4$& $a_{14}$& $a_{23}$& $a_{25}$& $a_{35}$\\
    $Z_{a_2}$  & $a_2$& $a_6$& $a_{12}$& $a_{23}$& $a_{24}$& $a_{25}$&          &$Z_{a_{15}}$& $a_1$& $a_5$& $a_{15}$& $a_{23}$& $a_{24}$& $a_{34}$\\
    $Z_{a_3}$  & $a_3$& $a_6$& $a_{13}$& $a_{23}$& $a_{34}$& $a_{35}$&          &$Z_{a_{23}}$& $a_2$& $a_3$& $a_{14}$& $a_{15}$& $a_{23}$& $a_{45}$\\
    $Z_{a_4}$  & $a_4$& $a_6$& $a_{14}$& $a_{24}$& $a_{34}$& $a_{45}$&          &$Z_{a_{24}}$& $a_2$& $a_4$& $a_{13}$& $a_{15}$& $a_{24}$& $a_{35}$\\
    $Z_{a_5}$  & $a_5$& $a_6$& $a_{15}$& $a_{25}$& $a_{35}$& $a_{45}$&          &$Z_{a_{25}}$& $a_2$& $a_5$& $a_{13}$& $a_{14}$& $a_{25}$& $a_{34}$\\
    $Z_{a_6}$  & $a_1$& $a_2$& $a_{3}$ & $a_{4}$ & $a_{5}$& $a_{6}$ &          &$Z_{a_{34}}$& $a_3$& $a_4$& $a_{12}$& $a_{15}$& $a_{25}$& $a_{34}$\\
    $Z_{a_{12}}$& $a_1$& $a_2$& $a_{12}$& $a_{34}$& $a_{35}$& $a_{45}$&          &$Z_{a_{35}}$& $a_3$& $a_5$& $a_{12}$& $a_{14}$& $a_{24}$& $a_{35}$\\
    $Z_{a_{13}}$& $a_1$& $a_3$& $a_{13}$& $a_{24}$& $a_{25}$& $a_{45}$&          &$Z_{a_{45}}$& $a_4$& $a_5$& $a_{12}$& $a_{13}$& $a_{23}$& $a_{45}$\\
    \hline
  \end{tabular}
        \caption{$(16,6)$-configuration in genus $2$}\label{table:confG2}
\end{table}

\begin{remm}\label{rem:sameconf}
As $\vartheta=0$, then by Equation (\ref{eq:defetax}) we have for $a,a'\in J_{\mathcal{C}}[2]$ that
$\eta[2[a]-2[0],y](a')=0$ implies that $\eta_f[2[a]-2[0],y](a')=0$.  
The converse is also true because of the $(16,6)$-configuration of the Kummer surfaces of $\mathcal{C}$ and $\mathcal{D}$.
So the description of the two configurations with the $2$-torsion points of $J_{\mathcal{C}}$ is the same.
\end{remm}

\paragraph{Image of the points in a fixed trope.}
To compute the equation of $\mathcal{D}$, we need the image in $\mathbb{P}^3$ of the $6$ $2$-torsion points of a given fixed trope.

\begin{remm}
We do not compute the $\eta_f$ functions at $2$-torsion points directly. Indeed, these functions have poles at some $2$-torsion points and the algorithm
to evaluate the $\eta_f$ functions at a $2$-torsion point does not behave well in practice (it can return an error)  because of the fact that the zero cycles used here are
also defined with $2$-torsion points (see Step $1$ in Section \ref{subsec:evaleta}).
We circumvent this difficulty using the configuration.
\end{remm}

Let $Z_a$ be a trope. It contains the six points $\phi(a_1+a),\ldots,\phi(a_6+a)$ in $\mathbb{P}^3$.
We compute the image by $\phi$ of each of these six points by computing the intersection
of $3$ tropes. Indeed, as the Kummer surface is seen in a projective space of dimension $3$, we need $3$ equations to determine a point.
We use the following properties, for $i\in\{1,\ldots,5\}$, that can be deduced from Table \ref{table:confG2}:
\[\{Z_{a_1}=0,Z_{a_2}=0,Z_{a_3}=0\}=\{\phi(a_6)\},\qquad \{Z_{a_i}=0,Z_{a_6}=0\}=\{\phi(a_i),\phi(a_6)\}.\]
Thus, by shifting, looking at the intersection  $\{Z_{a_1+a}=0,Z_{a_2+a}=0,Z_{a_3+a}=0\}$ gives us the projective point
$p_6=\phi(a_6+a)$.
Then the point $p_i=\phi(a_i+a)$ can be computed similarly with the intersection  $\{Z_{a_i+a}=0,Z_{a_6+a}=0,Z_b=0\}$ for some good choosen trope $Z_b$.
Otherwise, if we have the equation of the quartic $\kappa_{\mathcal{D}}$ describing the Kummer surface associated to $\mathcal{D}$, we can obtain this point with
$\{Z_{a_i+a}=0,Z_{a_6+a}=0,\kappa_{\mathcal{D}}=0\}$.

\paragraph{Parameterization.}
We now have a fixed trope $Z_a=c_1Z_1+\ldots+c_4Z_4=0$ and all the $6$ nodes $p_1$, \ldots, $p_6$ in it.
The intersection of a trope with the Kummer surface is a conic and this intersection is of multiplicity $2$. So $p_1$, \ldots, $p_6$ lie in a conic (in $\mathbb{P}^3$).
The double cover of the conic ramified at these points is the curve $\mathcal{D}$.
See \cite[Theorem 1.1]{DolgLeh} (or \cite[Proposition 10.2.3 and Corollary 10.2.4]{Birk} for the case $K=\mathbb{C}$).
We do a parameterization of the conic and apply it to the ramified points to obtain the Weierstrass points of $\mathcal{D}$.

\begin{enumerate}
\item Choose any point among $\{p_1,\ldots,p_6\}$, say $p_1$.
\item Let $k=\min_{i\in\{1,2,3,4\}}\{i:c_i\ne 0\}$  so that  $c_kZ_k=-\sum_{i=k+1}^4c_iZ_i$.
We look for equations of the form $E_j=c_{j,1}Z_1+\ldots+c_{j,4}Z_4$ passing through $p_1$ 
and $p_j$, for $j\in\{2,\ldots,6\}$ with $c_{j,k}=0$. Note that these lines correspond to the intersection of $Z_a$ with the unique other trope
passing through $p_1$ and $p_j$ (see Corollary \ref{cor:inter2}).
\item The fact that $E_j$ evaluated at $p_1$ must be equal to $0$ yields a relation between $c_{j,1},\ldots,c_{j,4}$.
Assume to simplify that $k=1$ so that we have $c_{j,1}=0$ and that $c_{j,2}$
can be written as a linear combination of $c_{j,3}$ and $c_{j,4}$; i.e. $c_{j,2}=P(c_{j,3},c_{j,4})$.
\item We obtain an affine parameterization in taking $c_{j,3}=1$ and $c_{j,4}=x$ and we look at the equation $E=P(1,x)Z_2+Z_3+xZ_4$.
\item For $j\in\{2,\ldots,6\}$, evaluate $E$ at $p_j$. This yields an equation of degree $0$ or $1$ in $x$.
 If it is $1$, then we obtain the value $x$ and if it is $0$, then $x$ is the point at infinity.
Thus, we have $5$ of the $6$ Weierstrass points of a model of $\mathcal{D}$.
\item The last one, associated to $p_1$, can be obtained in intersecting the equation $\kappa_{\mathcal{D}}$ of the Kummer surface with the trope $Z_{a}$
  and the equation $E$, factorizing, evaluating at $p_1$ and solving the factor having $x$ (this idea is implicit in \cite{CouvEz}).

If we want to avoid the computation of $\kappa_{\mathcal{D}}$, which is costly in terms of the number of evaluations
of the $\eta_f$ functions, a solution consists to do the parameterization two times for two different fixed points.
This yields two sets
of $5$ Weierstrass points (for different models of the curve) and we look then for a change of variables sending exactly $4$ elements of the first set in the second set. Applying the transformation on the fifth point of the first set gives us the unknown Weierstrass point of the second set. 
Recall that a nonsingular projective model of a genus $2$ curve is $Y^2=\prod_{i=1}^6c_iX^iZ^{6-i}$ in the projective space with weight $(1,3,1)$ and
that a transformation is of the form $(X:Y:Z)\mapsto (\alpha X+\beta Z:\gamma Y:\delta X+\epsilon Z)$ with $\alpha\epsilon-\beta\delta=1$.

\end{enumerate}  
\begin{remm}  Following this algorithm, we obtain a model of $\mathcal{D}$. But over a finite field, we may want to distinguish this curve from
  its twist.
    The knowledge of the cardinality
    of the Jacobian of $\mathcal{C}$ is a sufficient data for it.
  \end{remm}

\subsection{Optimized algorithm for the computation of $\mathcal{D}$ in genus $2$} 
The method exposed above with the computation of $\kappa_{\mathcal{D}}$ as in \cite{CouvEz} requires to compute the equation of the Kummer surface and the $6$ tropes $Z_{a_i+a}$, for $i\in\{1,\ldots,6\}$.
This makes at most $35\times 4+(6+4)\times 4=180$ evaluations of $\eta_f$ functions.
We have already showed that this number is greatly reduced as the computation of the equation of the Kummer surface is not necessary.
We explain how to optimize the computation of the image of the $2$-torsion points of a well chosen trope.

To simplify the notations, we denote by $\eta_a$ the function $\eta_f[2[a]-2[0],y]$.
A good choice of basis is $\eta_1=\eta_{a_6},\eta_2=\eta_{a_1},\eta_3=\eta_{a_2},\eta_4=\eta_{a_{12}}$
($Z_{a_6}$ and $Z_{a_1}$ contain $\phi(a_1)$ while $Z_{a_2}$ not, and $Z_{a_6}$, $Z_{a_1}$, $Z_{a_2}$ contain $\phi(a_6)$ while $Z_{a_{12}}$ not; this proves that the four functions are independent). 

\begin{remm}
We have noted that, with this basis, the equation of the Kummer surface $\kappa_{\mathcal{D}}$ does not have any exponent of
degree $3$ and $4$. So it has at most $19$ coefficients and the cost of the computation of $\kappa_{\mathcal{D}}$ is reduced.
\end{remm}

The fixed trope we choose is $Z_{a_6}$. This implies we want to compute the points $\phi(a_i)$ for $i\in\{1,\ldots,6\}$.
As $Z_{a_6}$ contains the $6$ points $\phi(a_i)$, we already know that the first coordinates in $\mathbb{P}^3$ of these $6$ points is $0$.
We have (according to Table~\ref{table:confG2})
\[p_6=\phi(a_6)=(0:0:0:1),\quad p_1=\phi(a_1)=(0:0:1:0),\quad p_2=\phi(a_2)=(0:1:0:0).\]
Fixing the point $p_6$, we obtain
the affine parameterization $Z_1=0$, $Z_2+xZ_3=0$. Then $p_1$ comes from $x=0$ while $p_2$ from $x=\infty$.
Thus, with this basis, we always obtain a degree $5$ model for
the isogenous curve $\mathcal{D}$ and the image of three of the six points in the fixed trope $Z_{a_6}$ are obtained for free.
 It remains to compute the images of $a_3$, $a_4$ and $a_5$ in $\mathbb{P}^3$. We have
\begin{itemize}
\item $\{Z_{a_6}=0,Z_{a_{34}}=0,Z_{a_{35}}=0\}$ gives us $p_3=\phi(a_3)$,
\item $\{Z_{a_6}=0,Z_{a_{34}}=0,Z_{a_{45}}=0\}$ gives us $p_4=\phi(a_4)$,
\item $\{Z_{a_6}=0,Z_{a_{35}}=0,Z_{a_{45}}=0\}$ gives us $p_5=\phi(a_5)$.
\end{itemize}
As by definition $Z_{a_6}=Z_1$, $Z_{a_1}=Z_2$, $Z_{a_2}=Z_3$ and $Z_{a_{12}}=Z_4$, we have to compute only the $3$ tropes $Z_{a_{34}}$, $Z_{a_{35}}$ and $Z_{a_{45}}$.
The computation is simplified for two reasons.
\begin{itemize}
\item 
For any $a\in J_{\mathcal{C}}[2]$, there is $a'\in\{a_6,a_1,a_2,a_{12}\}$ such that $Z_{a}$ contains $\phi(a')$ and such that exactly three of the four tropes
$Z_1$, \ldots, $Z_4$  contain $\phi(a')$.
So all the tropes can be written as a linear combination of three elements
among $\{Z_1,Z_2,Z_3,Z_4\}$ and thus we need the evaluation at $3$ points for computing a trope.
In particular, $Z_{a_{34}}$, $Z_{a_{35}}$ and $Z_{a_{45}}$ are linear combinations of $Z_2$, $Z_3$ and $Z_4$.
\item As we have defined all
the $\eta_f$ functions such that their values at $y$ is $1$, we just need the evaluations at two more  points.
\end{itemize}

Thus, we can obtain the images of $a_1,\ldots,a_6$ in $\mathbb{P}^3$ by computing the tropes $Z_{a_{34}}$, $Z_{a_{35}}$ and $Z_{a_{45}}$ which
can be done in $(3+3)\times 2=12$ evaluations of $\eta_f$ functions.

There is a slight amelioration in noting that for $a,b\in J_{\mathcal{C}}[2]$ and $x\in J_{\mathcal{C}}$
\begin{equation}\eta_{a+b}(x)=\eta_f[2[a]-2[0],y+b](x+b)\cdot\eta_b(x)\end{equation}
(look at the divisors and the evaluations at $y$ for the proof). Let $b=a_{45}$, and take some random point $z\in J_{\mathcal{C}}$ (not of $2$-torsion).
The function $\eta_{a_{35}}(x+a_{45})\cdot\eta_{a_{45}}(x)$ has the same divisor as $\eta_{a_{34}}(x)$.
So there is a constant $c$ such that $\eta_{a_{34}}(x)=c\cdot \eta_{a_{35}}(x+a_{45})\cdot\eta_{a_{45}}(x)$. Evaluating at $y$ we obtain
$1=c\cdot \eta_{a_{35}}(y+a_{45})$.
It remains to evaluate $\eta_{a_i}$ for $a_i\in\{a_1,a_2,a_{12},a_{35},a_{45}\}$ at the points $z$ and $z+a_{45}$ for computing $Z_{a_{34}}$, $Z_{a_{35}}$, $Z_{a_{45}}$.
So $1+5\times 2=11$ evaluations are enough instead of $12$.

We obtain Algorithm \ref{alg:eval2points} whose complexity is the same as the one for evaluating $\eta_f$ at a point but whose number of calls to a $\eta_f$ function
is minimized. \newline

\begin{algorithm}[H]
\KwData{The basis $\eta_{a_6}$, $\eta_{a_1}$, $\eta_{a_2}$, $\eta_{a_{12}}$ of the level $2$ functions}
\KwResult{$\phi(a_1),\ldots,\phi(a_6)$ in $\mathbb{P}^3$, where $\phi=(\eta_{a_6}:\eta_{a_1}:\eta_{a_2}:\eta_{a_{12}})$}
\BlankLine
\nl Take a random point in $z\in J_{\mathcal{C}}$\;
\nl Evaluate $\eta_{a_i}$ at $z$ and $z+a_{45}$, for $a_i\in\{a_1,a_2,a_{12},a_{35},a_{45}\}$\;
\nl Compute $c=1/\eta_{a_{35}}(y+a_{45})$\;
\nl Compute $c\cdot \eta_{a_{35}}(z+a_{45})\cdot \eta_{a_{45}}(z)$ and $c\cdot \eta_{a_{35}}(z)\cdot \eta_{a_{45}}(z+a_{45})$ using the previous evaluations.
These are $\eta_{a_{34}}(z)$ and $\eta_{a_{34}}(z+a_{45})$\;
\nl Compute the tropes $Z_{a_{34}}$, $Z_{a_{35}}$ and $Z_{a_{45}}$ which are of the form $c_2 Z_2+c_3 Z_3+c_4 Z_4$ using the previous
evaluations and the fact that $1=c_2+c_3+c_4$ (evaluation at $y$)\;
\nl We know that $\phi(a_6)=(0:0:0:1)$, $\phi(a_1)=(0:0:1:0)$ and that $\phi(a_2)=(0:1:0:0)$\;
\nl $\{Z_{a_6}=0,Z_{a_{34}}=0,Z_{a_{35}}=0\}$ gives $\phi(a_3)$\;
\nl $\{Z_{a_6}=0,Z_{a_{34}}=0,Z_{a_{45}}=0\}$ gives $\phi(a_4)$\;
\nl $\{Z_{a_6}=0,Z_{a_{35}}=0,Z_{a_{45}}=0\}$ gives $\phi(a_5)$\;
\caption{Computation of $\phi(a_1),\ldots,\phi(a_6)$ with $11$ evaluations}
\label{alg:eval2points}
\end{algorithm}

\paragraph{Example.}
Let $\mathcal{C}$ be given by the equation $Y^2=(X-179)(X-237)(X-325)(X-344)(X-673)$ over $\mathbb{F}_{1009}$.
A maximal isotropic subgroup of the $\ell=3$ torsion is generated by the divisors (in Mumford representation)
$T_1=\langle X^2 + 714X + 513, 182X + 273\rangle$ and  $T_2=\langle X^2 + 654X + 51, 804X + 545\rangle$.
We fix $y=\langle X^2 + 425X + 637, 498X + 930\rangle$, $\phi_u=\langle X^2 + 462X + 658, 365X + 522\rangle$,  
$\phi_y=\langle X^2 + 512X + 883, 827X + 148\rangle$.
We put $r_1=(179,0)$, $r_2=(237,0)$, $r_3=(325,0)$, $r_4=(344,0)$, $r_5=(673,0)$ and $r_6=\infty$.
We take the good basis of $\eta_f$ functions defined by the zero-cycles $2[a]-2[0]$ for $a\in\{0,r_1-r_6,r_2-r_6,r_1+r_2-2r_6\}$. Then
\[ Z_{a_6}=Z_1,\qquad Z_{a_1}=Z_2,\qquad Z_{a_2}=Z_3,\qquad Z_{a_{34}}= 953Z_2 + 55Z_3 + 2Z_4, \]
\[Z_{a_{35}}= 806Z_2 + 131Z_3 + 73Z_4,\qquad Z_{a_{45}}=894Z_2 + 123Z_3 + 1002Z_4\]
giving us the nodes
$(0:0:1:0)$, $(0:1:0:0)$, $(0:947:689:1)$, $(0:304:71:1)$, $(0:869:468:1)$, $(0:0:0:1)$
which are in the trope $Z_1=0$. Then
\begin{itemize}
\item 
Fixing the point $(0:0:0:1)$, we take the parameterization  $Z_1=0$, $Z_2+xZ_3=0$ and we obtain the
values  $\{0,\infty,498,351,397\}$ for $x$ respectively.
\item 
Fixing the point $(0:0:1:0)$, we take the parameterization  $Z_1=0$, $Z_2+xZ_4=0$ and we obtain the
values  $\{\infty,62,705,140,0\}$ for $x$ respectively.
\end{itemize}
For the transformation, we take the one sending $498$ to $62$ and $351$ to $705$ which is $(X:Y:Z)\mapsto (229X+37Z:Y:Z)$.
Then $397$ is sent to $140$, $0$ to $37$, $\infty$ to $\infty$ and $837$ to $0$.
Two models of the curve $\mathcal{D}$ over $\mathbb{F}_{1009}$ are
$11X(X-498)(X-351)(X-397)(X-837)$ and $X(X-62)(X-705)(X-140)(X-37)$ (after checking quadratic twist).

\subsection{Computing the equation of the isogenous curve in genus 3 if $\mathcal{D}$ is hyperelliptic}

We now focus  on the genus $3$ case and we assume that $\mathcal{D}$ is hyperelliptic with an imaginary model.
We make use of the $(64,29)$-configuration to compute the equation of $\mathcal{D}$ and this configuration does not depend on
$\mathcal{C}$ so the nature of this curve does not matter in theory. But when $\mathcal{C}$ is also hyperelliptic
(with an imaginary model), the link between the two curves is clearer because the description of the $2$-torsion is similar 
and working on $\mathcal{C}$ is as if we were working directly on $\mathcal{D}$
(just replace the $\eta_f$ functions  by the $\eta$ functions on $\mathcal{D}$, see Remark \ref{rem:sameconf}).

So in our exposition we assume that $\mathcal{C}:Y^2=\prod_{i=1}^7(X-r_i)$ is hyperelliptic.
Let $\eta_1$, \ldots, $\eta_8$ be a basis of the functions $\eta_f[2[a]-2[0],y]$
for $a\in J_{\mathcal{C}}[2]$.
Recall that the $64$ $2$-torsion points are denoted by $a_i$, $a_{ij}$ and $a_{ijk}$.
The trope $Z_{a_8}$ contains the $8$ points $\phi(a_i)$ and the $21$ points $\phi(a_{ij})$.

The $(64,29)$-configuration holds properties that the $(64,28)$-configuration does not have.
\begin{itemize}
\item 
Let $i\in\{1,\ldots,7\}$. As the image of the points $\{a_{i1},\ldots,a_{i7},a_i\}$ are in the trope $Z_{a_8}$, then the trope
$Z_{a_8+a_i}=Z_{a_i}$ contains the points $\{\phi(a_{i1}+a_i),\ldots,\phi(a_{i7}+a_i),\phi(a_i+a_i)\}=$ $\{\phi(a_1),\ldots,\phi(a_8)\}$.
Thus, the intersection of the $8$ tropes $\{Z_{a_1},\ldots,Z_{a_8}\}$ is equal to $\{\phi(a_1),\ldots,\phi(a_8)\}$
and in fact, any $4$ tropes among these $8$ have this intersection. We use this to compute the set $\{\phi(a_1),\ldots,\phi(a_8)\}$.
\item For any triplet of points among $\{\phi(a_1),\ldots,\phi(a_8)\}$, there always exists a trope not in
  $\{Z_{a_1},\ldots,Z_{a_8}\}$ which contain these three points and no other among them. We use this for the parameterization step.
  \end{itemize}
These properties can be proved using Proposition \ref{prop:confG3}.
There are obviously $64$ $8$-tuples of tropes having similar properties (just shift by a $2$-torsion point).

To compute the $8$ points $\phi(a_i)$, we choose the tropes $Z_{a_{ij}}$ for $ij \in\{24,37,67\}$ and $Z_{a_{ijk}}$ for $ijk\in\{123,145,167,256,345\}$ because 
each point $\phi(a_i)$ for $i\in\{1,\ldots,8\}$ is contained in exactly three of these $8$ tropes.
For any $a_i$, this gives $3$ tropes and adding the four tropes $\{Z_{a_1},\ldots,Z_{a_4}\}$,
we obtain $7$ equations from which we deduce  $\phi(a_i)$ in $\mathbb{P}^7$.
So computing $12$ tropes is enough to obtain the image of the eight $2$-torsion points $a_1,\ldots,a_8$ in $\mathbb{P}^7$.
If the basis $\eta_1$, \ldots, $\eta_8$  is defined using $8$ of the $12$ $2$-torsion points used for these $12$ tropes, then we only need to compute $4$ tropes.

Once we have  $\{\phi(a_1),\ldots,\phi(a_8)\}$ in $\mathbb{P}^7$, we can do the parameterization.
The four tropes  $\{Z_{a_1},\ldots,Z_{a_4}\}$ give $4$ equations. This time, we fix two points instead of one, which gives us $2$ others equations.
The rest is similar as in the genus $2$ case. Instead of lines, here we have planes, according to the second property above, passing through the two fixed points and a third one.

In the case where the curve $\mathcal{C}$ is non-hyperelliptic, we can still compute all the tropes and the
image of the $2$-torsion points and look for $4$ tropes intersecting in $8$ points, and proceed as
above. 

\section{Computing equations for the isogeny}\label{sec:eqiso}
Once we have the equation of the hyperelliptic curve $\mathcal{D}$ of genus $2$ or $3$, we want to compute rational fractions expliciting the isogeny.
The algorithm is composed as follows.
\begin{itemize}
\item Choose a basis of $\eta_f$ functions. This determines equations for the Kummer variety of $\mathcal{D}$.
  Find a linear change of variables to go from this model of the Kummer variety to a good representation of it, allowing one
  to compute the pseudo-addition law and to lift to the Jacobian.
\item Choose a single point in $\mathcal{C}(K[t])$, for a formal parameter $t$.
  Compute its image in the Kummer variety of $\mathcal{D}(K[t])$ by the isogeny, using the basis of $\eta_f$ functions, at small precision in~$t$.
\item Lift the point in the Kummer variety to a point $p$ in $J_{\mathcal{D}}(K[t])$.
\item Extend the point $p$ at a big enough precision.
\item Use $p$ and continuous fractions to compute the rational fractions.
\end{itemize}

The last two steps come from \cite{CouvEz} (genus $2$ case only).
For computing the image of a point of $\mathcal{C}(K[t])$ to $J_{\mathcal{D}}(K([t]))$,
the method given in \cite{CouvEz} is not efficient so we present a better solution based on a good representation of the Kummer variety.


\begin{remm} Even if we know $\mathcal{D}$, it is not possible for now to write a $\eta_f$ function  as a combination
  of $\eta$ functions defined over $J_{\mathcal{D}}$ (and not $J_{\mathcal{C}}$)
  because in the first case we work on $J_{\mathcal{C}}$ and in the others on $J_{\mathcal{D}}$.
  If we wish to do so, we would need for $x\in J_{\mathcal{C}}$ to know the point $f(x)\in J_{\mathcal{D}}$, which is what we want to compute.
\end{remm}

\subsection{Rational fractions describing the isogeny}\label{subsec:ratfrac}
See \cite[Section 6.1]{CouvEz} for more details in genus $2$.
Let $g\in\{2,3\}$. Assume we have $\mathcal{D}$ given by an affine model $Y^2=h_{\mathcal{D}}(X)$, where $h_{\mathcal{D}}$ is of degree $2g+1$.
Let $O_{\mathcal{D}}$ be the point at infinity.
Then $(2g-2)O_{\mathcal{D}}$ is a canonical divisor
and a point in the Jacobian $J_{\mathcal{D}}$ of $\mathcal{D}$ can be written generically as $z=Q_1+\ldots+Q_g-gO_{\mathcal{D}}$,
where $Q_i\ne O_{\mathcal{D}}$ and  $-Q_i\not\in\{Q_1,\ldots,Q_g\}$ for  all $i$ in $\{1,\ldots,g\}$.
Such a divisor can be represented by its Mumford representation. 

For $g=2$, define
\[  \mathbf{s}(z)=x(Q_1)+x(Q_2),\qquad  \mathbf{p}(z)=x(Q_1)x(Q_2)\]
\[  \mathbf{q}(z)=\frac{y(Q_2)-y(Q_1)}{x(Q_2)-x(Q_1)} \qquad   \mathbf{r}(z)=\frac{y(Q_1)x(Q_2)-y(Q_2)x(Q_1)}{x(Q_2)-x(Q_1)}.\]
 The Mumford representation of $z$ is \[\langle X^2-\mathbf{s}(z)X+\mathbf{p}(z),\mathbf{q}(z)X+\mathbf{r}(z)\rangle.\]
 Let now $F:\mathcal{C}\to J_{\mathcal{D}}$ be the function $F(P)=f(\iota(P-O_{\mathcal{C}}))$ 
 (recall that $f$ is the isogeny, $\iota$ stands for linear classes of divisors and we denote here by $O_{\mathcal{C}}$ the unique point at infinity of an imaginary model of $\mathcal{C}$). Since for every point $P=(u,-v)$ on $\mathcal{C}$
 we have that $F(-P)=F((u,-v))=-F(P)$, and as $v^2=h_{\mathcal{C}}(u)$, we deduce that there exist rational fractions
 $\mathbf{S}$,  $\mathbf{P}$,  $\mathbf{Q}$,  $\mathbf{R}$
 satisfying
 \[ \mathbf{s}(F(P))=\mathbf{S}(u),\quad  \mathbf{p}(F(P))=\mathbf{P}(u),\quad  \mathbf{q}(F(P))=v\mathbf{Q}(u),\quad  \mathbf{r}(F(P))=v\mathbf{R}(u)\]
 and such that $F((u,v))=\langle X^2-\mathbf{S}(u)X+\mathbf{P}(u),v(\mathbf{Q}(u)X+\mathbf{R}(u))\rangle$
in the Jacobian $J_{\mathcal{D}}$ of $\mathcal{D}$ in Mumford representation.
The degrees of these rational fractions are bounded by $2\ell$, $2\ell$, $3\ell+3$, $3\ell+3$ respectively
(we adapt the proof of \cite[Section 6.1]{CouvEz} considering that $\mathcal{D}:Y^2=h_{\mathcal{D}}(X)$ with $h_{\mathcal{D}}$ of degree $5$).
 
For $g=3$, define
 \[\begin{array}{ccl}
    \mathbf{s}(z)&=&\text{\scriptsize $x(Q_1)+x(Q_2)+x(Q_3)$},\\
    \mathbf{p}(z)&=&\text{\scriptsize $x(Q_1)x(Q_2)+x(Q_1)x(Q_3)+x(Q_2)x(Q_3)$},\\
    \mathbf{a}(z)&=&\text{\scriptsize $x(Q_1)x(Q_2)x(Q_3)$},\\
    \mathbf{r}(z)&=&\frac{((x(Q_2)-x(Q_3))y(Q_1)+(x(Q_3)-x(Q_1))y(Q_2)+(x(Q_1)-x(Q_2))y(Q_3))}{(x(Q_1)-x(Q_2))(x(Q_1)-x(Q_3))(x(Q_2)-x(Q_3))},\\
    \mathbf{t}(z)&=&\frac{(X^2(Q_2)-X^2(Q_3))y(Q_1)+(X^2(Q_3)-X^2(Q_1))y(Q_2)+(X^2(Q_1)-X^2(Q_2))y(Q_3)}{((x(Q_1)-x(Q_2))(x(Q_1)-x(Q_3))(x(Q_2)-x(Q_3))},\\
    \mathbf{e}(z)&=&\frac{(X^2(Q_2)x(Q_3)-x(Q_2)X^2(Q_3))y(Q_1)+(x(Q_1)X^2(Q_3)-X^2(Q_1)x(Q_3))y(Q_2)+(X^2(Q_1)x(Q_2)-x(Q_1)X^2(Q_2))y(Q_3)}{(x(Q_1)-x(Q_2))(x(Q_1)-x(Q_3))(x(Q_2)-x(Q_3))},\\
  \end{array}
 \] 
 so that the Mumford representation of $z$ is 
\[\langle X^3-\mathbf{s}(z)X^2+\mathbf{p}(z)X-\mathbf{a}(z),\mathbf{r}(z)X^2-\mathbf{t}(z)X+\mathbf{e}(z) \rangle.\]
By the same argument from genus $2$, there exist rational fractions
 $\mathbf{S}$,  $\mathbf{P}$,  $\mathbf{A}$,  $\mathbf{R}$,  $\mathbf{T}$,  $\mathbf{E}$ satisfying
 \[ \mathbf{s}(F(P))=\mathbf{S}(u),\quad
  \mathbf{p}(F(P))=\mathbf{P}(u),\quad
  \mathbf{a}(F(P))=\mathbf{A}(u),\]\[
  \mathbf{r}(F(P))=v\mathbf{R}(u),\quad
  \mathbf{t}(F(P))=v\mathbf{T}(u),\quad
  \mathbf{e}(F(P))=v\mathbf{E}(u),
\]
and such that  $F((u,v))=\langle X^3-\mathbf{S}(u)X^2+\mathbf{P}(u)X-\mathbf{A}(u) ,v(\mathbf{R}(u)X^2-\mathbf{T}(u)X+\mathbf{E}(u))\rangle$.

\subsection{Computing the rational fractions from the image of a single formal point}
See \cite[Section 6.2]{CouvEz} for more details in genus $2$. Again $g\in\{2,3\}$.
The morphism $F:\mathcal{C}\to J_{\mathcal{D}}$ induces a map $F^*:H^0(J_{\mathcal{D}},\Omega^1_{J_{\mathcal{D}}/K})\to H^0(\mathcal{C},\Omega^1_{\mathcal{C}/K})$.
It is a classical result that a basis of $H^0(\mathcal{C},\Omega^1_{\mathcal{C}/K})$ is given by $dX/Y$, \ldots, $X^{g-1}dX/Y$.
Identifying $J_{\mathcal{D}}$ with $\mathcal{D}^{(g)}$ (the symmetric product)
we can see $H^0(J_{\mathcal{D}},\Omega^1_{J_{\mathcal{D}}/K})$ as the invariant subspace
of $H^0(\mathcal{D}^{(g)},\Omega^1_{\mathcal{D}^{(g)}/K})$ by the permutation of $g$ factors. A basis of this space is
$dX_1/Y_1+\ldots+dX_g/Y_g$, \ldots, $X_1^{g-1}dY_1/Y_1+\ldots+X_g^{g-1}dX_g/Y_g$. Let $(m_{i,j})_{1\le i,j\le g}$ be the matrix of $F^*$ with respect to these two bases. Thus for $i\in\{1,\ldots,g\}$
\[F^*(X_1^{i-1}dX_1/Y_1+\ldots+X_g^{i-1}dX_g/Y_g)=(m_{1,i}+\ldots+m_{g,i}X^{g-1})dX/Y.\]

Let $P=(u,v)$ be a point on $\mathcal{C}$ such that $v\ne 0$ and let $Q_i$ be $g$ points on $\mathcal{D}$  as in the previous subsection,
such that $F(P)$ is the class of $Q_1+\ldots+Q_g-gO_{\mathcal{D}}$.
Let $t$ be a formal parameter and set $L=K[t]$. Define $u(t)=u+t$ and $v(t)$ as the square root of $h_{\mathcal{C}}(u(t))$
which is equal to $v$ when $t=0$. The point $P(t)=(u(t),v(t))$ lies on $\mathcal{C}(L)$. The image of $P(t)$ by $F$ is the class
of $Q_1(t)+\ldots+Q_g(t)-gO_{\mathcal{D}}$ for $g$ $L$-points $Q_1(t)$, \ldots, $Q_g(t)$ on $\mathcal{D}(L)$. We explain in the next subsection how to compute them
at a given precision.
Write $Q_i(t)=(x_i(t),y_i(t))$. The coordinates satisfy the non-singular first-order system of differential equations
for $i\in\{1,\ldots,g\}$
\begin{equation}\label{eq:system}\left\{
    \begin{array}{l}
      \frac{x_1^{i-1}\dot{x}_1(t)}{y_1(t)}+\ldots+\frac{x_g^{i-1}\dot{x}_g(t)}{y_g(t)}=\frac{(m_{1,i}u(t)^0+\ldots+m_{g,i}u(t)^{g-1})\dot{u}(t)}{v(t)},\\
      {\text{\scriptsize $y_i(t)^2=h_{\mathcal{D}}(x_i(t))$}}.\\
    \end{array}\right.
\end{equation}

This system can be used to compute the rational fractions of Section \ref{subsec:ratfrac} in three steps.
Indeed, assume we have been able to compute for a single point $(u(t),v(t))$ the $g$ points $(x_j(t)+O(t^g),y_j(t)+O(t^g))$ at precision $g$.
\begin{enumerate}
\item 
  Looking at coefficients of degrees from $0$ to $g-1$ in the first line of Equation (\ref{eq:system}) for a fixed index $i$
  gives $g$ equations with the $g$ unknown  
  $m_{1,i}$, \ldots, $m_{g,i}$ that we can solve. Thus we obtain the numbers $m_{j,i}$ for $i,j\in\{1,\ldots,g\}$.
\item Now, we want to increase the accuracy of the formal expansions. This can be done degree by degree. The RHS of the first line of Equation 
  (\ref{eq:system})
  is known up to any given precision. Assume we know $x_j(t)$ and
  $y_j(t)$ up to $O(t^d)$ for all $j$ (and their derivatives up to $O(t^{d-1})$).
  If $c_{j,d}$ is the coefficient of degree $d$ of $x_j(t)$, then the coefficient of degree $d-1$ of its derivative is $d\cdot c_{j,d}$.
  For $j\in\{1,\ldots,g\}$, define $\dot{x}_j^{d-1}(t)$ as 
  the sum of $\dot{x}_j(t)$ up to degree $d-2$ in $t$ with $d\cdot c_{j,d}t^{d-1}$,
  where $c_{j,d}$ is a variable. Plug it in Equation  (\ref{eq:system}) and deduce
  for each $i$ an equation in the $c_{j,d}$ looking
  at the coefficients of degree $d-1$ in $t$. This gives $g$ equations with $g$ unknown that we solve.
  The second line of Equation (\ref{eq:system}) allows us to compute $y_1(t)+O(t^{d+1})$ and $y_2(t)+O(t^{d+1})$.
\item Do rational reconstruction using continued fractions to deduce the rational fractions. For example, for  $\textbf{S}$ in genus $2$, put $s(t)=x_1(t)+x_2(t)$
  and remark that $s(t)=s_{\le 0}(t)+1/(1/(s(t)-s_{\le 0}(t)))$, where $s_{\le 0}(t)$ designates the sum of the monomials of degree less or equal to $0$ in $s(t)$.
  So while the degree of $s$ in $t$ is $>0$, store $s_{\le 0}(t)$ as a rational fraction in $t$ in a stack and put $s(t)=1/(s(t)-s_{\le 0}(t))\in K((t))$.
  After the while loop, for each element $s_p$ of the stack do $s(t)=1/s(t)+s_p$.
  This gives a rational fraction in $t$.
  Evaluate it in $t-u(0)$ to obtain $\textbf{S}(t)$.  
\end{enumerate}
The complexity of these steps is independent of $\ell$.
In practice, these three steps are negligible with respect to the time of computation of the image of the single formal point.

\subsection{Computing the image of a formal point}
Let $L=K[t]/(t^g)$ and  $P(t)=(u(t),v(t))\in \mathcal{C}(L)$. We want to compute $F(P(t))$ which is in the Jacobian
of $\mathcal{D}$ over the field $L$.

\begin{remm}
We could do it using intersection of tropes. This implies we have to look at the divisor $\mathcal{Y}_{F(P(t))}$ (which is not symmetric),
seen as $\mathcal{X}_{P(t)-O_{\mathcal{C}}}$ (see Subsection \ref{subsec:etaf} for the definition of $\mathcal{X}$ and $\mathcal{Y}$).
To obtain this divisor, we could consider the zero-cycle
$[P(t)-O_{\mathcal{C}}]+[Q]+[-(P(t)-O_{\mathcal{C}})-Q]-3[0]$ for some point $Q\in J_{\mathcal{C}}$, producing
a function in $H^0(J_L,\mathcal{O}_{J_L}(3\mathcal{X}))$, that is a level $3$ function. Thus, we would need a basis of level $3$ functions and
algebraic relations between them, which is costly to compute. This is the idea in \cite[Section 6.3]{CouvEz}. 
\end{remm}

We propose to compute $F(P(t))$  in two steps. First we compute the image of $P(t)$ in the Kummer surface of $\mathcal{D}$ and then we lift
this point to the Jacobian.
The lifting step is easy to do in genus $2$ if the Kummer surface is constructed as in \cite{Prolegomena} or as in \cite{Stubbs,Muller,Stoll} in genus $3$.
Thus for any given representation of the Kummer variety, we can search for
a linear change of variables allowing one to transform to the one in good representation. We recall first what these two good representations are.

\subsubsection{Representation of the Kummer variety}
\paragraph{Standard representation of the Kummer surface.}
Let $\mathcal{D}:Y^2=h_{\mathcal{D}}(X)=\sum_{i=0}^{5}c_iX^i$ be a hyperelliptic curve
and $x=(x_1,y_1)+(x_2,y_2)-2O_{\mathcal{D}}$ a generic reduced divisor.
Let $F_0(x_1,x_2)=2c_0+c_1(x_1+x_2)+2c_2(x_1x_2)+c_3(x_1+x_2)x_1x_2+2c_4(x_1x_2)^2+c_5(x_1+x_2)(x_1x_2)^2$ and 
$\beta_0(x)=(F_0(x_1,x_2)-2y_1y_2)/(x_1-x_2)^2$.
Put
\[K_2=e_2^2-4e_1e_3,\qquad K_1=-2(2c_0e_1^3+c_1e_1^2e_2+2c_2e_1^2e_3+c_3e_1e_2e_3+2c_4e_1e_3^2+c_5e_2e_3^2),\]
\[ K_0=(c_1^2-4c_0c_2)e_1^4-4c_0c_3e_1^3e_2-2c_1c_3e_1^3e_3-4c_0c_4e_1^2e_2^2+4(c_0c_5-c_1c_4)e_1^2e_2e_3+\]\[(c_3^2+2c_1c_5-4c_2c_4)e_1^2e_3^2-4c_0c_5e_1e_2^3-4c_1c_5e_1e_2^2e_3-4c_2c_5e_1e_2e_3^2-2c_3c_5e_1e_3^3-c_5^2e_3^4.\]
Then an equation for the Kummer surface of the Jacobian of $\mathcal{D}$ is $\kappa_{\mathcal{D},opt}^{(2)}:K_2e_4^2+K_1e_4+K_0$ in the variables $e_1$, $e_2$, $e_3$, $e_4$.

The image of the divisor $x$ is $(1:x_1+x_2:x_1x_2:\beta_0(x))$ in the Kummer surface associated to $\mathcal{D}$, seen in $\mathbb{P}^3$, and represented by the equation
$\kappa_{\mathcal{D},opt}^{(2)}$.
In the case where the divisor $x$ is of the form $(x_1,y_1)-O_{\mathcal{D}}$, its image is $(0:1:x_1:c_5x_1^2)$ and the image of $0\in J_{\mathcal{D}}$ is
$(0:0:0:1)$.
Thus, if we have a point in the Kummer surface represented in this way, and using the equation of $\mathcal{D}$, it is easy to deduce the two corresponding opposite points in the Jacobian.

\begin{remm}
  We want to compute equations giving the image in $J_{\mathcal{D}}$ by $f$ of a point of $J_{\mathcal{C}}$.
  But $f$ is not completely determined by the kernel $\mathcal{V}$ and the curve $\mathcal{D}$. Indeed, these two data do not allow us to
  distinguish $f$ and $-f$. In our algorithm, we need to compute the image of only one formal point. We can choose randomly between the two opposite points in the Jacobian and this will determine our isogeny. Note that if we had to compute the image of many points for determining the isogeny, there could be a problem of compatibility between the random choices.
\end{remm}

\paragraph{Representation of the Kummer variety of dimension $3$.}
The preceding representation has been generalized in genus $3$ in \cite[Chapter 3]{Stubbs}.
The author defines for a genus $3$ hyperelliptic curve of the form $Y^2=h_{\mathcal{D}}(X)$, with $h_{\mathcal{D}}(X)$ of degree $7$,
eight functions defining a map from the Jacobian to the Kummer variety, seen in $\mathbb{P}^7$.
In particular, for a generic reduced divisor $x=(x_1,y_1)+(x_2,y_2)+(x_3,y_3)-3O_{\mathcal{D}}$, the four first functions are
$1$, $x_1+x_2+x_3$, $x_1x_2+x_1x_3+x_2x_3$, $x_1x_2x_3$ so that lifting to the Jacobian is easy.
We do not write here all the equations but refer the reader to \cite[Section 2]{Muller}, where the author extends the embedding to the Kummer variety
to non-generic divisors.
On the other side, the author of \cite{Stoll} has defined eight functions $\xi_1$, \ldots, $\xi_8$ on an arbitrary hyperelliptic curve
of genus $3$ (over a field of characteristic different from $2$) defining an embedding to the Kummer variety (see \cite[Section 3]{Stoll}).
These functions are build from the $8$ functions of \cite{Stubbs,Muller}.
These are the functions we use and we denote by $\kappa_{\mathcal{D},opt}^{(3)}$ the associated set of equations for the Kummer variety,
which is described by $1$ quadric and $34$ quartics. The lift to the Jacobian is also easy (the four first functions are the same as the four mentioned above, up to a constant) and the pseudo-addition law is described.

\subsubsection{From a representation to another.}
We now  explain  how to change representation.
Recall that we started from the curve $\mathcal{C}$ and through a basis $\eta_1$, \ldots, $\eta_{2^g}$ of $\eta_f$ functions, which are level $2$  functions,
we can obtain by linear algebra equations $\kappa_{\mathcal{D}}$ of the Kummer variety of the Jacobian of $\mathcal{D}$ and 
the image of all the $2$-torsion points in it. On the other side, starting from the equation of $\mathcal{D}$, the computation of the equations
$\kappa_{\mathcal{D},opt}^{(2)}$ or $\kappa_{\mathcal{D},opt}^{(3)}$ and the image of all the $2$-torsion points in them is easy and does not depend on the parameter $\ell$.

\paragraph{Genus $2$ case.}
We look for a change of variables to go from the quartic $\kappa_{\mathcal{D}}$ to the quartic $\kappa_{\mathcal{D},opt}^{(2)}$ of the form
\[\begin{array}{cclccl}
    S_1&=&m_1Z_1+m_2Z_2+m_3Z_3+m_4Z_4,\quad&    S_2&=&m_5Z_1+m_6Z_2+m_7Z_3+m_8Z_4,\\
    S_3&=&m_{9}Z_1+m_{10}Z_2+m_{11}Z_3+m_{12}Z_4,\quad&    S_4&=&m_{13}Z_1+m_{14}Z_2+m_{15}Z_3+m_{16}Z_4,
\end{array}\]
such that
\begin{equation}\label{eq:kummer}\kappa_{\mathcal{D},opt}^{(2)}(S_1,S_2,S_3,S_4)=\kappa_{\mathcal{D}}(Z_1,Z_2,Z_3,Z_4).\end{equation}

We give two solutions for obtaining the change of variables.
\begin{enumerate}  
\item Comparing the coefficients in the variables $Z_1$, \ldots, $Z_4$ in Equation (\ref{eq:kummer}) gives us many conditions on the $m_i$.
In theory, we can do a Gröbner basis with the $16$ unknown $m_1,\ldots,m_{16}$ to find a solution of this system of equations. But in practice, in all the small examples we tried,
the computations were so long that we stopped them before their end.
  
We can add some conditions to facilitate the Gröbner basis computation noting that we can send $0=(0:0:0:1)\in\kappa_{\mathcal{D}}$ to $0=(0:0:0:1)\in\kappa_{\mathcal{D},opt}^{(2)}$. This gives $m_4=m_8 =m_{12}=0$.
We can not put $m_{16}=1$ despite projectivity because of the equality we want between the quartics representing the Kummer surfaces.

This solution requires to compute the equation $\kappa_{\mathcal{D}}$, which is costly.
\item 
  In the case we do not want to compute $\kappa_{\mathcal{D}}$, we can look for a transformation sending the $2$-torsion points in $\kappa_{\mathcal{D}}$ to
  the $2$-torsion points in $\kappa_{\mathcal{D},opt}^{(2)}$ to build conditions.  
  We assume that we have all the $2$-torsion points in the Kummer surface $\kappa_{\mathcal{D},opt}^{(2)}$, while we have on the $\kappa_{\mathcal{D}}$ representation
  $\phi(a_1)$, \ldots, $\phi(a_6)$, $\phi(a_{12})$, $\phi(a_{34})$ and $\phi(a_{35})$.
  
We send $0$ to $0$, giving us the conditions $m_4=m_8 =m_{12}=0$. We can fix $m_{16}=1$.
Then, with three nested \emph{for} loops, we test all the $2$-torsion points in $\kappa_{\mathcal{D},opt}^{(2)}$ onto which the points $\phi(a)\in\kappa_{\mathcal{D}}$ can be sent to, for $a\in\{a_3,a_4,a_5\}$. For each
point, this gives $3$ conditions on the $m_i$ ($3$ and not $4$ because of the projectivity). Moreover, as we want to preserve the group structures of the sets of $2$-torsion points, for a choice of the image
of $\phi(a_3)$, $\phi(a_4)$ and $\phi(a_5)$, this fixes an image for $\phi(a_{34})$ and $\phi(a_{35})$.

Thus we obtain $4+3\times 5=19$ conditions on the $16$ $m_i$ and we compute a
Gröbner basis. Once a solution is found, we can verify if the points 
$\phi(a_1)$, $\phi(a_2)$ and $\phi(a_{12})$ in the Kummer surface $\kappa_{\mathcal{D}}$  are sent to $2$-torsion points in $\kappa_{\mathcal{D},opt}^{(2)}$.
\end{enumerate}
\begin{remm} The asymptotic complexity of Gröbner basis is difficult to establish. But here, all the computations do not depend on $\ell$
  (only on the finite field $K$). In practice, the two solutions work very well.
\end{remm}
\begin{remm}\label{rem:Za3}
The points $\phi(a_i)$ for $i\in\{1,\ldots,6\}$ are known since we needed them to compute the equation of $\mathcal{D}$
(in the optimized version). The points $\phi(a_{12})$, $\phi(a_{34})$ and $\phi(a_{35})$ are obtained looking at the intersections $\{Z_{a_{12}}=0,Z_{a_{34}}=0,Z_{a_{35}}=0\}$, $\{Z_{a_{3}}=0,Z_{a_{12}}=0,Z_{a_{34}}=0\}$, $\{Z_{a_3}=0,Z_{a_{12}}=0,Z_{a_{35}}=0\}$ respectively (see Table \ref{table:confG2}).
All these tropes have already been  computed to find the equation of $\mathcal{D}$ except for $Z_{a_3}$ in Algorithm \ref{alg:eval2points}.
It can be computed either now or during this Algorithm 
with the cost of two evaluations of $\eta_f[2[a_3]-2[0],y]$.
\end{remm}

\paragraph{Genus $3$ case.} We proceed with the same idea as in the second solution of the genus $2$ case to go from $\kappa_{\mathcal{D}}$ to $\kappa_{\mathcal{D},opt}^{(3)}$.
This time we have $64$ unknown variables.
We fix a basis of the $2$-torsion points of $J_{\mathcal{D}}$, seen in $\kappa_{\mathcal{D}}$ (this basis contains $6$ points).
We want to send the points in this basis to the $2$-torsion points seen in $\kappa_{\mathcal{D},opt}^{(3)}$ and preserve the group structures.
Using $4$ nested \emph{for} loops, we obtain enough conditions to compute the transformation ($2^3\times 7+1=57$ conditions with $3$ loops
and $2^4\times 7+1=113$ conditions with $4$ loops. Recall that we are in $\mathbb{P}^7$).
However, in practice, this method takes too much time: many hours in our examples, with fields of size $10^5$.

To improve this method, we propose the following solution.
Recall that the Kummer varieties $\kappa_{\mathcal{D}}$ and $\kappa_{\mathcal{D},opt}^{(3)}$ are described with $1$ quadric and around $34$ quartics if the curves are hyperelliptic.
Computing all of them is costly in practice ($330\times 8$ evaluations of $\eta_f$ functions, see Section \ref{sec:kumm}) so instead
we compute only the quadric in the equations of the Kummer variety ($36\times 8$ evaluations).
We look for a transformation that sends a quadric to the other, as in the first solution of the genus $2$ case, which gives us many conditions
on the $64$ unknown variables and facilitate the Gröbner basis computation. Then we proceed in the same way as above but this time, $3$ nested \emph{for} loops
are enough instead of $4$. In our examples, it took around half an hour to test all the possibilities (but a solution was found in a few minutes).
This is still not satisfactory because this solution is still too slow even for small examples and because we do computations which depend on $\ell$.

\subsubsection{Image of a single point.}
Let $(u(t),v(t))$ be a point on $\mathcal{C}(L)$, where $L=K[t]$. We want the image of $P(t)=(u(t),v(t)) - O$ in the Kummer surface represented by $\kappa_{\mathcal{D},opt}^{(2)}$.

We can not directly compute the image of $P$ by the $\eta_f$ function as $P$ is a pole of these functions.
But this is not the case of its multiples in the Jacobian.
It is well-known that Kummer surfaces are not endowed with a group structure but a pseudo-addition law  can be defined on them.
This means that if we have the points $\pm P_1$, $\pm P_2$, $\pm (P_1+P_2)$ in the Kummer surface, then we can compute $\pm (P_1-P_2)$ on it.

Let $m>1$ be an integer. Compute the image of $mP(t)$, $(m+1)P(t)$ and $(2m+1)P(t)$ on the Kummer surface $\kappa_{\mathcal{D}}$
(compute $(\eta_1(nP(t)):\ldots:\eta_4(nP(t)))$ for $n\in\{m,m+1,2m+1\}$), then use the transformation to deduce
the corresponding points on the Kummer surface represented by $\kappa_{\mathcal{D},opt}^{(2)}$,
do the pseudo-addition  
to deduce the image of $P(t)$ by the isogeny on the Kummer surface  $\kappa_{\mathcal{D},opt}^{(2)}$ and deduce from it a point on the Jacobian of
$\mathcal{D}(L)$. Thus, using $\kappa_{\mathcal{D},opt}^{(2)}$ has the double advantage that we can do pseudo-addition in it and that lifting to the
Jacobian is easy.

This step requires $12$ evaluations of $\eta_f$ functions in the field $L$ ($9$ if $\eta_1=\eta_f[2[a_6]-2[0],y]$).

This idea also works in genus $3$. See \cite{Stoll} for the pseudo-addition.

\subsection{Example for hyperelliptic curves of genus $3$}
As the moduli space of hyperelliptic curves is of dimension $5$ in the $6$-dimensional moduli space of genus $3$ curves, 
if we start from a hyperelliptic curve of genus $3$ and a maximal isotropic subgroup of the $\ell$-torsion,
the corresponding isogenous curve is generically non-hyperelliptic.
The nature of the isogenous curve can be established from the type of the configuration
or the equations describing the Kummer threefold, in particular the presence of a quadric.

We have built examples of isogenous hyperelliptic
curves using \cite[Satz 4.4.2]{PhDWeng}, which states that if the Jacobian of $\mathcal{C}$ has complex multiplication by $\mathcal{O}_K$
with $\Q(i)\subset K$ and is simple, then $\mathcal{C}$ is hyperelliptic.
Curves with these properties are provided in \cite{PhDWeng}.
The fact that an isogeny preserves the field
of complex multiplication gives some probability that the isogenous curve is also  hyperelliptic.
For instance, the curves on $\mathbb{F}_{120049}$
  \[\mathcal{C}:Y^2=X^7 + 118263X^5 + 44441X^3 + 81968X,\]
\[\mathcal{D}:Y^2=X^7 + 87967X^6 + 102801X^5 + 70026X^4 + 30426X^3 + 37313X^2 +  77459X\]
  are $(5,5,5)$-isogenous. 
  The Mumford representation of the generators of the isotropic subgroup are
  \[T_1=\langle  X^3 + 90254X^2 + 103950X + 34646, 63966X^2 + 19029X + 62065\rangle,\]
  \[T_2=\langle X^3 + 29700X^2 + 10920X + 14179, 77142X^2 + 66846X + 84040\rangle,\]
  \[T_3=\langle X^3 + 119858X^2 + 87344X + 82114, 51063X^2 + 95007X + 64731\rangle\]
  and the isogeny is described by the following equations

\[\begin{split}
\mathbf{S}=&  (26590u^{13} + 38875u^{12} + 11144u^{11} + 39196u^{10} + 48794u^9 + 
  80531u^8 + 56286u^7 + 42203u^6 +\\ &  49314u^5 + 34405u^4 + 
        28021u^3 + 82360u^2 + 112863u + 64433)/(u^8 + 107005u^7 + 
        34717u^6 + \\ & 96329u^5 + 81848u^4 + 90494u^3),
\end{split}\]
\[\begin{split}
\mathbf{P}=&   (13588u^{13} + 99739u^{12} + 60510u^{11} + 3267u^{10} + 56188u^9 + 
        27913u^8 + 79606u^7 + 79490u^6 + \\&  39953u^5 + 101739u^4 + 
        118959u^3 + 88791u^2 + 59459u + 44419)/(u^8 + 107005u^7 + 
        34717u^6  \\&      + 96329u^5 + 81848u^4 + 90494u^3),\\
        \end{split}\]
\[\begin{split}
        \mathbf{A}=&    (87680u^{12} + 77147u^{11} + 47767u^{10} + 91104u^9 + 101830u^8 + 
        51358u^7 + 106657u^6 +  1059u^5 +  \\ & 28890u^4 + 72926u^3 + 
        40489u^2 + 20614u + 13587)/(u^7 + 107005u^6 + 34717u^5 + 
        96329u^4 + \\ & 81848u^3 + 90494u^2),\\
        \end{split}\]
\[\begin{split}
\mathbf{R}=&    (12306u^{20} + 37665u^{19} + 84758u^{18} + 83076u^{17} + 51365u^{16} + 
        42432u^{15} + 76312u^{14} + \\ & 63248u^{13} + 97292u^{12} + 25304u^{11}
        + 38304u^{10} + 26932u^9 + 108075u^8 + 40558u^7 + 5431u^6 +\\ &
        22057u^5 + 100345u^4 + 113409u^3 + 73221u^2 + 39576u + 
        78248)/(u^{16} + 107005u^{15} + \\ & 32931u^{14} +  103407u^{13} + 
        67011u^{12} + 105334u^{11} + 109571u^{10} + 59270u^9 + 83877u^8 +\\ &
        34998u^7 + 98548u^6 + 24580u^5),\\\end{split}\]
\[\begin{split}
\mathbf{T}=&    (39012u^{20} + 43063u^{19} + 41666u^{18} + 90531u^{17} + 18614u^{16} + 
        112658u^{15} + 99705u^{14} + \\ & 15123u^{13} + 56542u^{12} + 
        44122u^{11} + 40721u^{10} + 103078u^9 + 29236u^8 + 114961u^7 
        + 99184u^6 + \\ & 32122u^5 + 94412u^4 + 42358u^3 + 4616u^2 + 
        66587u + 86686)/(u^{16} + 107005u^{15} + 32931u^{14} + \\ &
        103407u^{13} + 67011u^{12} + 105334u^{11} + 109571u^{10} + 
        59270u^9 + 83877u^8 + 34998u^7 + \\ & 98548u^6 + 24580u^5),\\\end{split}\]
\[\begin{split}
\mathbf{E}=&    (77510u^{19} + 5507u^{18} + 57109u^{17} + 115038u^{16} + 83721u^{15} + 
        32646u^{14} + 7900u^{13} + 28888u^{12} + \\ & 83235u^{11} + 112193u^{10}
        + 99943u^9 + 38123u^8 + 70050u^7 + 48716u^6 + 15860u^5 + 
        65499u^4 + \\ & 38669u^3 + 35838u^2 + 82517u + 82266)/(u^{15} + 
        107005u^{14} + 32931u^{13} + 103407u^{12} + \\ & 67011u^{11} + 
        105334u^{10} + 109571u^9 + 59270u^8 + 83877u^7 + 34998u^6 +
        98548u^5 + 24580u^4).
\end{split}\]

\section{Using algebraic theta functions}\label{sec:compatibility} 

There are well-known formulas allowing one to reconstruct the equation of hyperelliptic and non-hyperelliptic curves of genus $2$ and $3$
from analytic theta constants. According to Mumford's theory, these formulas are (generically) valid
for any field if we use algebraic theta functions \cite{MumEqAVI,MumEqAVII,MumEqAVIII}.
The $\eta_f[2[a]-2[0],y]$ functions have the same divisors as the algebraic theta functions.
We want to find a constant $c_a$ for each $2$-torsion point $a$ such that the functions
$c_a\cdot \eta_f[2[a]-2[0],y]$ satisfy the same algebraic relations as the analytic theta functions.


In this section, we begin by recalling the theory of algebraic theta function and we apply it in our context.
Then we give the definition and some fundamental properties of the analytic theta functions and finally
we explain how to find the constants $c_a$ in genus $2$ and $3$ so that we can use theta based formulas for computing isogenies.

\subsection{Algebraic theta functions}\label{subsec:alg}

This section is based on \cite[Section 1]{MumEqAVI}. See also \cite{DamienThesis} on this subject.
We give examples to illustrate the theory in our case (see also \cite[Section 4]{CouvEz}).

Let $X$ be an abelian variety of dimension $g$ over an algebraically closed field $K$ of characteristic~$p$.
Let $\mathcal{L}$ be an invertible sheaf on $X$. Denote by $H(\mathcal{L})$ the subgroup of closed points $x\in X$ such that
$t_x^*\mathcal{L}\simeq \mathcal{L}$. Here, $t_x:X\to X$ denotes the translation by $x$.
We have that $\mathcal{L}$ is ample if and only if $H(\mathcal{L})$ is finite and $\Gamma(X,\mathcal{L}^n)\ne \{0\}$ for all $n>0$. 
Moreover, there exists a positive integer $d$, called the degree of $\mathcal{L}$, such that $\dim H^0(X,\mathcal{L}^n)=d\cdot n^g$, for all $n\ge 1$.
Assume from now that $p\nmid d$. Then $d^2$ is the cardinality of $H(\mathcal{L})$.  

\begin{exem} Let $X=J_{\mathcal{C}}$ for a curve $\mathcal{C}$ of genus $g$. Consider $\mathcal{L}=\mathcal{O}_{J_{\mathcal{C}}}(W_{-\theta})$ which is of degree $d=1$.
  Then $\mathcal{L}^n$ is of degree $n^g$ and $H(\mathcal{L}^n)$ if of cardinality $n^{2g}$. It is equal to the set of the $n$-torsion points $J_{\mathcal{C}}[n]$.
\end{exem}

Define $\mathcal{G}(\mathcal{L})$ as the set of pairs $(x,\phi_x)$ where $x$ is a closed point of $X$ and $\phi_x$ an isomorphism $\mathcal{L}\to t_x^*\mathcal{L}$.
This is a group for the group law $(y,\phi_y)\cdot (x,\phi_x)=(x+y,t_x^*\phi_y\circ\phi_x)$. The inverse of $(x,\phi_x)$ is clearly $(-x,(t_{-x}^*\phi_x)^{-1})$ and the neutral element is $(0,\mathrm{id})$.
This group is called \emph{the Mumford Theta group}.
The forgetful map $(x,\phi_x)\in \mathcal{G}(\mathcal{L})\mapsto x\in H(\mathcal{L})$ is surjective and the following sequence is exact:
\[ 0\to K^* \to \mathcal{G}(\mathcal{L})\to H(\mathcal{L})\to 0,\]
where $m\in K^*\mapsto (0,[m]) \in \mathcal{G}(\mathcal{L})$ and $[m]$ is the multiplication-by-$m$ automorphism of $\mathcal{L}$.

Let $x,y\in H(\mathcal{L})$ and $\tilde{x}$, $\tilde{y} \in \mathcal{G}(\mathcal{L})$ which lie over $x$ and $y$.
Define $e^{\mathcal{L}}(x,y)=\tilde{x}\cdot \tilde{y}\cdot \tilde{x}^{-1}\cdot \tilde{y}^{-1}$.
This is a non-degenerate skew-symmetric bilinear pairing from $H(\mathcal{L})$ to $K^*$, called the \emph{commutator pairing}.
A \emph{level subgroup} $\tilde{K}$ of  $G(\mathcal{L})$ is a subgroup such that $K^*\cap \tilde{K}=\{0\}$, i.e. $\tilde{K}$ is isomorphic to its image in $H(\mathcal{L})$.
A level subgroup over $K<H(\mathcal{L})$ exists  if and only if $e^{\mathcal{L}}(x,y)=1$ for all $x,y\in K$.

\begin{exem} 
  Let   $\mathcal{L}=\mathcal{O}_{J_{\mathcal{C}}}(\ell W_{-\theta})$ be an invertible sheaf, where $\ell$ is a prime number $\ne p$.
  As in \cite[Section 4]{CouvEz}, for $u\in J_{\mathcal{C}}[\ell](K)$, we let $\theta_u$ be a function with divisor $\ell W_{-\theta+u}-\ell W_{-\theta}$ (i.e. $\theta_u=\eta[\ell[u]-\ell[0]]$).
  Let $\phi_u:f\in H^0(J_{\mathcal{C}},\mathcal{L})\mapsto \theta_u\circ t_u\cdot f\in H^0(J_{\mathcal{C}},t_u^*\mathcal{L})$. Then $(u,\phi_u)\in G(\mathcal{L})$.
  For $u,v\in J_{\mathcal{C}}[\ell]$, we have that $e^{\mathcal{L}}(u,v)=\theta_u\cdot \theta_v\circ t_{-u}\cdot (\theta_u\circ t_{-v})^{-1}\cdot \theta_v^{-1}$, which does not depend
  on the choice of $\theta_u$ and $\theta_v$, which are defined up to a constant.
  Note that the functions $\theta_u\cdot \theta_v\circ t_{-u}$ and $\theta_v\cdot \theta_u\circ t_{-v}$ have the same divisor, so the image of the pairing is indeed in
  (the image in $\mathcal{G}(\mathcal{L})$ of) $K^*$.
  Moreover, they are equal when $e^{\mathcal{L}}(u,v)=1$, and thus when $(u,\phi_u)$ and $(v,\phi_v)$ are in a same level subgroup. In this case, we also have, looking
  at the group law (which is commutative in a level subgroup), that  $\theta_{u+v}=\theta_u\cdot \theta_v\circ t_{-u}$ ($=\theta_v\cdot \theta_u\circ t_{-v}$).
\end{exem}

We can write $H(\mathcal{L})=K_1(\mathcal{L})\oplus K_2(\mathcal{L})$ for some subgroups $K_1(\mathcal{L})$, $K_2(\mathcal{L})$ of $H(\mathcal{L})$ such that  
$e^{\mathcal{L}}(x,y)=1$ for $x,y\in K_1(\mathcal{L})$ or $x,y\in K_2(\mathcal{L})$. Moreover, there is an isomorphism $x\in K_2(\mathcal{L})\mapsto e^{\mathcal{L}}(\cdot,x)\in \mathrm{Hom}(K_1(\mathcal{L}),K^*)$.
Let $\delta=(d_1,\ldots,d_k)$ be the sequence of elementary divisors of $K_1(\mathcal{L})$. We have $d_{i+1}|d_i$ and $d_i>1$. 
The elementary divisors of $H(\mathcal{L})$ are $(d_1,d_1,d_2,d_2,\ldots,d_k,d_k)$. We say that $\mathcal{L}$ is of type $\delta$.

Let $\delta=(d_1,\ldots, d_k)$ be a sequence of positive integers as above. Denote $K(\delta)=\oplus_{i=1}^k\mathbb{Z}/ d_i \Z$, $\widehat{K(\delta)}=\mathrm{Hom}(K(\delta),K^*)$ 
and $H(\delta)=K(\delta)\oplus \widehat{K(\delta)}$. Let $\mathcal{G}(\delta)$ be, as a set, equal to $K^*\times K(\delta)\times \widehat{K(\delta)}$.
This is a group, called the \emph{Heisenberg group}, with the group law $(\alpha,x,l)\cdot (\beta,y,l')=(\alpha\cdot \beta \cdot l'(x),x+y,l+l')$.

If $\mathcal{L}$ is of type $\delta$, then the sequence $0\to K^*\to \mathcal{G}(\mathcal{L})\to H(\mathcal{L})\to 0$ is isomorphic to the sequence
$0\to K^*\to \mathcal{G}(\delta)\to H(\delta)\to 0$.
An isomorphism of $\mathcal{G}(\mathcal{L})$ and $\mathcal{G}(\delta)$ which is the
identity on $K^*$ is called a \emph{$\theta$-structure} on $(X,\mathcal{L})$.

\begin{exem} Still for $\mathcal{L}=\mathcal{O}_{J_{\mathcal{C}}}(\ell W_{-\theta})$. 
  The elementary divisors are $(\ell,\ldots,\ell)$ ($g$ times). Let $\delta=(\ell,\ldots,\ell)$,  $\sigma_1$ an isomorphism
  from $K(\delta) \simeq (\Z/\ell\Z)^g$ to $K_1(\mathcal{L})$ and
  $\sigma_2$ the isomorphism from $\widehat{K(\delta)}$ to $K_2(\mathcal{L})$ determined by $\sigma_1$ and the isomorphism $K_2(\mathcal{L})\simeq \mathrm{Hom}(K_1(\mathcal{L}),K^*)$.
  Let $\tilde{K}_i$ be level subgroups over $K_i(\mathcal{L})$, for $i\in\{1,2\}$.
  Note that $(\alpha,x,l)\in G(\delta)=(\alpha,0,0)\cdot (1,0,l)\cdot (1,x,0)$.
  We send $(\alpha,0,0)$ to the image of $\alpha$ in $G(\mathcal{L})$, $(1,0,x)$ to the point in $\tilde{K}_1$ over $\sigma_1(x)$
  and $(1,0,l)$ to the point in $\tilde{K}_2$ over $\sigma_2(l)$. This determines an isomorphism from $G(\delta)$ to $G(\mathcal{L})$. 
  Remark that $(1,0,l)\cdot (1,x,0)=(l(x)^{-1},0,0)\cdot (1,x,0)\cdot (1,0,l)$. The isomorphism gives the same image in both side because
  $l(x)=e^{\mathcal{L}}(\sigma_1(x),\sigma_2(l))$.  
\end{exem}

Define $U_{(x,\phi_x)}:\Gamma(X,\mathcal{L})\to \Gamma(X,\mathcal{L})$, for $(x,\phi_x)\in\mathcal{G}(\mathcal{L})$, by $U_{(x,\phi_x)}(f)=t_{-x}^*(\phi_x(f))$ for
all $f\in\Gamma(X,\mathcal{L})$. This is an action of the group $\mathcal{G}(\mathcal{L})$ since $U_{(y,\phi_y)}(U_{(x,\phi_x)}(f))=U_{(x+y,t_x^*\phi_y\circ \phi_x)}(f)$.
Let $V(\delta)$ be the vector space of functions $f$ on $K(\delta)$ with values in $K$. Then $\mathcal{G}(\delta)$ acts on $V(\delta)$ by
$U_{(\alpha,x,l)}(f)=\alpha\cdot l\cdot f\circ t_x$.

Assume now that the invertible sheaf $\mathcal{L}$ is very ample.
A $\theta$-structure $\Theta$ determines in a canonical way \emph{one} projective embedding of $X$.
Indeed, there is a unique, up to scalar multiples, isomorphism $\psi: \Gamma(X,\mathcal{L})\to V(\delta)$  which commutes with the action of $\mathcal{G}(\mathcal{L})$
and $\mathcal{G}(\delta)$. This isomorphism induces a unique isomorphism of projectives spaces $\mathbb{P}[\Gamma(X,\mathcal{L})]\to\mathbb{P}[V(\delta)]$.
Then, since $\mathcal{L}$ is very ample, there is a canonical embedding $X\to \mathbb{P}[\Gamma(X,\mathcal{L})]$.
Finally, order the elements $a_1, \ldots, a_m$ of the finite group $K(\delta)$. Then a basis of $V(\delta)$ is composed of the
set of Kronecker delta functions $\delta_i$ at the $a_i$. This defines an isomorphism $\mathbb{P}[V(\delta)]\to \mathbb{P}^{m-1}$. Note that $m$ is equal to the degree $d$ of $\mathcal{L}$.

So the \emph{$\theta$-structure} determines a canonical basis of $\Gamma(X,\mathcal{L})$ up to scalar multiples, and thus an embedding to $\mathbb{P}^{d-1}$.
This basis is $\{\psi(\delta_1),\ldots, \psi(\delta_d)\}$.   
We call these functions the \emph{canonical algebraic theta functions}.

\begin{exem} We continue with the previous examples. The isomorphism $\psi$ satisfies, for all $f\in V(\delta)$ and $(\alpha,x,l)\in G(\delta)$,
  $\psi(U_{(\alpha,x,l)}(f))=U_{\Theta((\alpha,x,l))}(\psi(f))$, that is
  \[\psi(\alpha\cdot l\cdot f\circ t_x)=\alpha\cdot \theta_{\sigma_2(l)}\cdot \theta_{\sigma_1(x)}\circ t_{-\sigma_2(l)}\cdot \psi(f)\circ t_{-\sigma_1(x)-\sigma_2(l)}.\]
  Denote now by $\delta_x$ the Kronecker delta functions at $x\in K(\delta)$.
  We deduce from $\delta_y\circ t_x=\delta_{y-x}$  the equality $\psi(\delta_{y-x})=\theta_x\circ \psi(\delta_y)\circ t_{-x}$, for any $x,y\in K(\delta)$.
  If we take $\psi(\delta_x)$ such that $(\psi(\delta_x))=(\theta_{-x})$, we can see that these equalities are satisfied.  
\end{exem}

The canonical basis of theta functions satisfy many algebraic relations.
For example, the \emph{duplication formula} \cite[Corollaire 4.3.7]{DamienThesis} which is a consequence of the \emph{addition formula} \cite[Théorème 4.3.5]{DamienThesis}
(proved analytically by Koizumi \cite{Koizumi} and then algebraically by Kempf \cite{Kempf89}).
The most important relations are the \emph{Riemann relations} \cite[Théorème 4.4.6]{DamienThesis}.
We give in the next section these equations in the case $K=\mathbb{C}$ only to simplify the exposition.

Following the examples for $\ell=2$, we have that a subset of the $\ell^{2g}$ eta functions $c_a\cdot \eta[2[a]-2[0],y]$, for $a\in J_{\mathcal{C}}[2]$,
form a canonical theta basis, where $c_a$ is a constant and  $y\in J_{\mathcal{C}}$ is fixed.
We do not try to find this basis and the constants $c_a$ using the theory. We do it using the algebraic relations.
As in \cite{CossetRobert,LubRob2}, we will focus on the case $K=\mathbb{C}$ but our formulae and algorithms apply to any field of characteristic $\ne 2$.


\begin{remm} The image in $\mathbb{P}^{d-1}$ of the neutral element of $X$ by the embedding is called the \emph{theta-null point}.
  Conversely, at some conditions, the theta-null point determines  $X$, $\mathcal{L}$  and a $\theta$-structure (see \cite[Page 94]{DamienThesis}).
  Furthermore, if $f:(X,\mathcal{L}_X,\Theta_{\mathcal{L}_X}) \to (Y,\mathcal{L}_Y,\Theta_{\mathcal{L}_Y})$ is an isogeny of polarized abelian varieties
  with theta structure, there is a theorem called \emph{the isogeny theorem} (see for example \cite[Proposition 2.2]{LubRob}) relating the canonical bases induced by the $\theta$-structures.
  The method of \cite{CossetRobert,LubRob,LubRob2} is based on this theorem and a precise use of the theory of algebraic theta functions for any abelian variety (and not only Jacobians of curves).
\end{remm}

\subsection{Analytic theta functions}
Analytic theta functions have been widely studied and are well understood from many points of views.
Good references are \cite{Mumford83,Mumford84, Birk}.
In this section, $g$ is an integer $\ge 1$.

Let $z\in\C^g$ and $\Omega$ in the Siegel upper-half space $\HH_g$ (the $g\times g$ symmetric matrices over the complex numbers with positive definite
imaginary part). The classical theta function is 
\[ \theta(z,\Omega)=\sum_{n\in\Z^g}\exp{(i\pi\,^t\!n\Omega n+2 i \pi\,^t\!n z)}\]
and the classical theta function with characteristic $(m,n)$, where $m,n\in\Q^g$, is
\begin{equation}\label{eq:thetacar}\thetacar{m}{n}(z,\Omega)=\exp{(i\pi\,^t\!m\Omega m+2i\pi\,^t\!m(z+n))}\cdot\theta(z+\Omega m+n,\Omega).\end{equation}
Let $r$ be an integer $\ge 2$ and $\Omega$ fixed. Then the $r^{2g}$ theta functions of the form $\thetacar{m}{n}(z,\Omega)^r$
for  $m,n$ representatives of the classes of $\frac 1r\Z^g/\Z^g$ are said to be of \emph{level $r$}.
Then $r^g$ linearly independent functions among them provide an embedding of the abelian variety seen as the torus $\C^g/(\Omega\Z^{g}+\Z^{g})$ to $\mathbb{P}^{r^g-1}(\C)$
unless $r=2$ where the embedding is only from the Kummer variety $\C^g/(\Omega\Z^{g}+\Z^{g})/\sim$, for $\sim$ the equivalence relation such that $z\sim -z$.
Many bases and relations between them can be found in \cite[Chapitre 3]{Cosset}.

Let $m,n\in\Q^g$ and  $m_1,m_2\in\Z^g$. According to \cite[Page 123]{Mumford83} we have 
\[
  \thetacar{m}{n}(z+\Omega m_1+m_2,\Omega)=\exp{(-i\pi\,^t\!m_1\Omega m_1-2 i\pi\,^t\!m_1 z)}\cdot\exp{(2i\pi(\,^t\!mm_2-\,^t\!nm_1))}\cdot\thetacar{m}{n}(z,\Omega),\]
\begin{equation}\label{eq:eq1} 
  \mathrm{and}\qquad\thetacar{m+m_1}{n+m_2}(z,\Omega)=\exp{(2i\pi\,^t\!mm_2)}\cdot\thetacar{m}{n}(z,\Omega).\end{equation}
Moreover, using the definitions, we have the equality $\thetacar{m}{n}(-z,\Omega)=\thetacar{-m}{-n}(z,\Omega)$.

Let $\mathcal{C}$ be a smooth projective curve of genus $g$ over $\C$ and $W$ be the image of the symmetric product $\mathcal{C}^{(g-1)}$ in $\Pic^{g-1}(\mathcal{C})$
(as in Section \ref{subsec:def}).
Denote by $\Theta$ the zero divisor of $\thetacar{0}{0}(z,\Omega)$ ($\Omega$ fixed corresponding to $\mathcal{C}$).
According to \cite[Chapter II, Theorem 3.10]{Mumford83}, there exists a theta characteristic $\theta$
($\theta$ is a linear equivalence class of divisors of degree $g-1$ and $2\theta$ is the canonical class)
such that the image by the Abel-Jacobi map of $W_{-\theta}$ is $\Theta$.
From now on, let $m$, $n$ be representatives of $\frac 12\Z^{g}/\Z^{g}$. Using Equations (\ref{eq:thetacar}) and (\ref{eq:eq1}), we have 
\[\thetacar{m}{n}(z+\Omega m+n,\Omega)=\exp{(-i\pi\,^t\!m \Omega m-2i\pi \,^t\!m z+4i\pi\,^t\!m n)}\cdot\thetacar{0}{0}(z,\Omega)\]
from which we deduce 
\[\thetacar{m}{n}(z,\Omega)=\thetacar{m}{n}((z-\Omega m-n)+(\Omega m+n),\Omega)\]
\[=\exp{(i\pi\,^t\!m \Omega m-2i\pi \,^t\!m z+6i\pi\,^t\!m n)}\cdot\thetacar{0}{0}(z-\Omega m-n,\Omega).\]
The divisor of $\thetacar{m}{n}(z,\Omega)$ ($\Omega$ is fixed) is then $\Theta_{\Omega m+n}$.
Note that $\Omega m+n$ is a $2$-torsion point. We are interested in the functions
of the form $\thetacar{m}{n}(z,\Omega)^2/\thetacar{0}{0}(z,\Omega)^2$ having divisor $2\Theta_{\Omega m+n}-2\Theta$
which is similar to the ones of the  $2^{2g}$ ($g\in\{2,3\}$) level $2$
 functions $\eta_f[2[a]-2[0],y]$ (for the $2$-torsion point $a$ in $J_{\mathcal{C}}$).

 Define  \[\theta_{m,n}(z):=c_{m,n}\cdot\thetacar{m}{n}(z,\Omega)^2/\thetacar{0}{0}(z,\Omega)^2,\]
for some constants $c_{m,n}$, $m$ and $n$ representatives of the classes of $\frac 12\Z^g/\Z^g$ and some $\Omega$ fixed (corresponding to $\mathcal{C}$).
We want to multiply the $\eta_f[2[a]-2[0],y]$ functions by constants such that these new functions verify the same algebraic relations as the analytic theta functions. We speak then of algebraic theta functions.
Applying the previous equalities, we obtain
\[\theta_{m,n}(\Omega m+n)=c_{m,n}\cdot\exp{(-2i\pi\,^t\!m\Omega m)}\cdot\thetacar{0}{0}(0,\Omega)^2/\thetacar{0}{0}(\Omega m +n,\Omega)^2\]
and \[\theta_{m,n}(0)=c_{m,n}\cdot\exp{(2i\pi\,^t\!m\Omega m+4i\pi\,^t\!m n)}\cdot\thetacar{0}{0}(\Omega m +n,\Omega)^2/\thetacar{0}{0}(0,\Omega)^2\]
if the denominator is not $0$.
Finally, the product of these two functions gives us the following fundamental relation
\begin{equation}\label{eq:gtorg0} \theta_{m,n}(\Omega m+n)\theta_{m,n}(0)=c_{m,n}^2\exp{(4i\pi\,^t\!m n)}.\end{equation}  
We will use this relation with $\eta_f[2[a]-2[0]]$ to deduce a constant $c_a$ corresponding to $c_{m,n}^2$.
We will then explain how to choose a square root of $c_a$ (for all $a$) and explain that multiple choices are possibles (See Remark \ref{rem:choix}). 

A lot of algebraic relations between the analytic theta functions can be deduced from the two following propositions.
  \begin{prop}[Riemann's theta formula] Let $m_1$,$m_2$,$m_3$,$m_4$ in $\mathbb{R}^{2g}$. Put
    $n_1=\frac 12 (m_1+m_2+m_3+m_4)$, $n_2=\frac 12 (m_1+m_2-m_3-m_4)$, $n_3=\frac 12 (m_1-m_2+m_3-m_4)$, $n_4=\frac 12 (m_1-m_2-m_3+m_4)$.
    Then
    \[\theta_{m_1}\theta_{m_2}\theta_{m_3}\theta_{m_4}=\frac 1{2^g}\sum_{\alpha}\exp{(4i\pi m'_1\,^t\!\alpha'')}\theta_{n_1+\alpha}\theta_{n_2+\alpha}\theta_{n_3+\alpha}\theta_{n_4+\alpha},\]
    where, for $m\in\mathbb{R}^{2g}$, we denote $m=(m',m'')$ and $\theta_m=\thetacar{m'}{m''}(0,\Omega)$ and where $\alpha$ runs over a complete set
    of representatives of $\frac 12\Z^{2g}/\Z^{2g}$.
\end{prop}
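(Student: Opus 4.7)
The strategy is to prove Riemann's theta formula by expanding both sides as lattice series and applying an orthogonal change of summation indices. I would first expand using
$\thetacar{m'}{m''}(0,\Omega) = \sum_{k \in \Z^g} \exp(i\pi\,^t\!(k+m')\Omega(k+m') + 2i\pi\,^t\!(k+m')m'')$,
so the LHS becomes a sum over $(k_1,k_2,k_3,k_4) \in (\Z^g)^4$, and for each fixed $\alpha$ on the RHS we similarly get a sum over $(\Z^g)^4$ with characteristics shifted by $n_i+\alpha$.

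The key ingredient is the Hadamard matrix $H = \begin{ppsmallmatrix}1&1&1&1\\1&1&-1&-1\\1&-1&1&-1\\1&-1&-1&1\end{ppsmallmatrix}$, which is symmetric and satisfies $H^2=4I$; the hypothesis on the $n_i$ says precisely that $n=\tfrac12 Hm$, and by involution $m=\tfrac12 Hn$. I would then substitute $l_i = \tfrac12\sum_j H_{ij}(k_j+m'_j)-n'_i$ in the LHS expansion. Since $H/2$ is orthogonal, the quadratic form is preserved: $\sum_i\,^t\!(k_i+m'_i)\Omega(k_i+m'_i)=\sum_i\,^t\!(l_i+n'_i)\Omega(l_i+n'_i)$, and the linear terms transform analogously once one sets $n''_i=\tfrac12\sum_j H_{ij}m''_j$ (which coincides with the $m''$-parts of the $n_i$).

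The subtle point is that $(l_i)$ need not lie in $(\Z^g)^4$: the map $H/2$ sends $(\Z^g)^4$ into $(\tfrac12\Z^g)^4$, and pulling back to $(\Z^g)^4$ picks out a sublattice of index $2^{2g}$. The $2^{2g}$ cosets are indexed by $\alpha=(\alpha',\alpha'')\in\tfrac12\Z^{2g}/\Z^{2g}$: the $\alpha'$ part absorbs the fractional first coordinates of the $l_i$ into the shifted characteristics $n_i+\alpha$, while the $\alpha''$ part appears through the discrete Fourier identity $\tfrac{1}{2^g}\sum_{\alpha''\in\frac12\Z^g/\Z^g}\exp(4i\pi\,^t\!s\,\alpha'')=\mathbf{1}_{s\in\Z^g}$, which is exactly the tool needed to restrict the sum to the relevant sublattice. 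The distinguished role of $m_1$ in the twist $\exp(4i\pi\,^t\!m'_1\alpha'')$ arises because $n_1$ is the unique combination with all plus signs, so the parity obstruction governing which $(k_i)$ transform to integer $(l_i)$ can be expressed in terms of $m'_1$ alone.

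I expect the main obstacle to be bookkeeping: aligning the $2^{2g}$ cosets with the shifted characteristics $n_i+\alpha$ on the RHS, and verifying that the Fourier twist emerges as $\exp(4i\pi\,^t\!m'_1\alpha'')$ with the correct normalisation $1/2^g$, without misplacing any of the many sign choices. Proceeding through the orthogonality of $H/2$ and tracking the phases explicitly in one computation is the cleanest route; once the change of variables and the Fourier identity are combined correctly, the formula drops out termwise.
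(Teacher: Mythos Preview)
Your sketch is the standard proof of Riemann's theta formula via the Hadamard change of variables and Fourier averaging, which is precisely the argument in Igusa's \emph{Theta Functions}, Chapter~IV, Theorem~1; the paper does not give an independent proof but simply cites that reference. One minor bookkeeping point: the sublattice of $(\Z^g)^4$ on which the transformed indices $l_i$ are all integral has index $2^g$ (the common parity condition $k_1+k_2+k_3+k_4\in 2\Z^g$), not $2^{2g}$; the remaining factor of $2^g$ in the count $|\tfrac12\Z^{2g}/\Z^{2g}|=2^{2g}$ comes from the $\alpha''$-average, which together with the $1/2^g$ normalisation produces the indicator you describe.
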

\begin{proof} See \cite[Chapter IV, Theorem 1]{Igu72}.\end{proof}
\begin{prop}[Duplication formula] For  $m$, $n$  representatives of $\frac 12\Z^{g}/\Z^{g}$, 
  \[\thetacar{m}{n}(z,\Omega)^2=\frac 1{2^g}\sum_{\beta\in\frac12\Z^g/\Z^g}\exp{(4i\pi\,^t\! m\beta)}\thetacar{0}{n+\beta}(z,\frac \Omega 2)\thetacar{0}{n}(z,\frac \Omega 2).\]
  \end{prop}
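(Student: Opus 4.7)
The plan is to verify the identity by expanding both sides as Fourier series in $z$ and matching coefficients via orthogonality of characters on the finite group $\frac12\Z^g/\Z^g$. First I would expand $\thetacar{m}{n}(z,\Omega)^2$ as a double series indexed by $(k_1,k_2)\in\Z^g\times\Z^g$ and apply the substitution $p=k_1+k_2$, $q=k_1-k_2$ (with the parity constraint $p\equiv q\pmod{2}$). The total quadratic form then splits as $\tfrac12[\,^t(p+2m)\Omega(p+2m)+\,^t q\,\Omega q]$, while the linear term in $z$ becomes $2i\pi\,^t(p+2m)(z+n)$, giving a clean series decomposition of the left-hand side.

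Next I would expand the right-hand side. Each factor $\thetacar{0}{\cdot}(z,\Omega/2)$ is, by definition, a series with quadratic form $\tfrac12\,^t l\,\Omega l$, so their product is a double series over $(l,l')\in\Z^g\times\Z^g$. Only one factor depends on $\beta$, so after interchanging summations the $\beta$-sum reduces to
\[\sum_{\beta\in\frac12\Z^g/\Z^g}\exp(2i\pi\,^t(l+2m)\beta)=\begin{cases}2^g&\text{if }l\equiv 2m\pmod{2},\\ 0&\text{otherwise.}\end{cases}\]
The $\tfrac{1}{2^g}$ factor cancels the $2^g$, and the parity condition allows us to re-index by writing $l=2m+2k$ with $k\in\Z^g$, so the right-hand side becomes a double series over $(k,l')$.

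The final step is to exhibit a bijection between the index set $\{(p,q)\in\Z^g\times\Z^g:p\equiv q\pmod 2\}$ on the left and $\Z^g\times\Z^g$ on the right (parameterizing $(k,l')$) which identifies the quadratic forms and the linear parts in $z+n$ simultaneously. This is the main obstacle: it requires a careful substitution so that the cross terms between $p$ and $q$ on the left are reproduced by the cross terms between $k$ and $l'$ on the right, and the characteristic $m$ contributes the correct asymmetric shift. A natural candidate is $p=l'+2k$ with $q$ chosen from the difference, after which the remaining algebra is mechanical but tedious. A cleaner alternative, which is the route taken in \cite[Théorème 4.3.5 and Corollaire 4.3.7]{DamienThesis}, is to deduce the duplication formula as a specialization of the general addition (or \emph{isogeny}) theorem for theta functions: the Fourier-theoretic content is identical, but the bookkeeping is organized through the theta structures from Subsection~\ref{subsec:alg} rather than carried out by hand on Fourier coefficients.
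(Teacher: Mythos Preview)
The paper does not give an independent argument here; its entire proof is the citation ``See \cite[Chapter IV, Theorem 2]{Igu72}.'' Your outline --- expand both sides as double Fourier series, apply the change of variables $(k_1,k_2)\mapsto(p,q)=(k_1+k_2,k_1-k_2)$ on the left, and collapse the $\beta$-sum on the right via orthogonality of characters on $\tfrac12\Z^g/\Z^g$ --- is exactly the classical computation carried out in Igusa (and in Mumford's \emph{Tata Lectures}), so your approach does not differ from the referenced proof; it \emph{is} that proof, sketched. Your closing remark that the bookkeeping is more transparently organized by first proving the addition formula and then specializing (as in \cite[Th\'eor\`eme~4.3.5 and Corollaire~4.3.7]{DamienThesis}) is also standard, and is precisely what the paper points to elsewhere when it mentions the Koizumi--Kempf addition formula.
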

\begin{proof} See \cite[Chapter IV, Theorem 2]{Igu72}.\end{proof}

\subsection{Genus $2$ case: Rosenhain invariants} \label{subsec:Rosenhain}


A genus $2$ curve can be written in the Rosenhain form $Y^2=X(X-1)(X-\frak{r}_1)(X-\frak{r}_2)(X-\frak{r}_3)$, where,
over $\C$, we have 
\[\frak{r}_1=\frac{\theta_0^2\theta_1^2}{\theta_3^2\theta_2^2},\qquad \frak{r}_2=\frac{\theta_1^2\theta_{12}^2}{\theta_2^2\theta_{15}^2},\qquad
\frak{r}_3=\frac{\theta_0^2\theta_{12}^2}{\theta_3^2\theta_{15}^2}.\]
Here, we denote the analytic theta constants (the theta functions for $z=0$) of level $2$ using Dupont's notation
\[\theta_{n_0+2n_1+4m_0+8m_1}(\Omega):=\thetacar{m/2}{n/2}(0,\Omega)\]
for $m=\,^t\!(m_0,m_1)$, $n=\,^t\!(n_0,n_1)$ and $m_i,n_i\in\{0,1\}^2$. We drop the $\Omega$ when we work on a fixed abelian variety.

There are $16$ theta constants and $6$ among them are identically zero: the odd theta constants, that is, those for which
$\,^t\!mn\equiv 1 \bmod 2$. Otherwise we speak of even theta constants. 

We come back to the algebraic case with the notations of Section \ref{subsec:isoG2}.
Let $e_1,e_2,f_1,f_2$ be a symplectic basis of the $2$-torsion of some  genus $2$ curve with an imaginary model.
We want to find the unique $2$-torsion point which is at the intersection of the $6$ tropes of the form $Z_a$ for $a$ a  $2$-torsion point having odd
characteristic (with respect to the fixed symplectic basis).

According
to Proposition \ref{prop:confG2}, if we put
$a''=\begin{ppsmallmatrix}1&1\\1&1\end{ppsmallmatrix}$, then the image in $\mathbb{P}^3$ of any of the six $2$-torsion points having odd characteristic
lie in the trope $Z_{a_0+a''}$, where $a_0$ is defined in this proposition.
The trope $Z_{a_6}$ contains the image of the points $\{a_1,\ldots,a_6\}$; thus $Z_{a_0+a''}$ contains
the image of $\{a_1+a_0+a'',\ldots,a_6+a_0+a''\}$ and 
\[\{Z_{a_1+a_0+a''}=0,\ldots,Z_{a_6+a_0+a''}=0\}=\{a_0+a''\}.\] 
The $2$-torsion point $a_0+a''$ is the one corresponding to $z=0$ (with
respect to the chosen symplectic basis).

  \begin{remm}\label{rem:calcula0}
This gives another way of computing $a_0$: compute the unique point at the inter-
section of the six tropes $Z_a$ with $a$ of odd characteristic and add $\begin{ppsmallmatrix}1&1\\1&1\end{ppsmallmatrix}$
to the result.
    \end{remm}

  We propose the following algorithm to compute the equation of the isogenous curve $\mathcal{D}$ using the algebraic theta functions
  and the Rosenhain form. We do not try to minimize the number of evaluations of $\eta_f$ functions.

\begin{enumerate}
\item Compute all the tropes $Z_a$ for a fixed basis of level $2$ functions where the first element of the basis is $\eta_f[2[a_6]-2[0],y]$.
\item Deduce from it the image of the $2$-torsion points in $\mathbb{P}^3$.
\item For all $a\in J_{\mathcal{C}}[2]$ and $a\not\in\{a_1,\ldots,a_6\}$, we take a lift $a'$ in $\mathbb{A}^4$ of its image in $\mathbb{P}^3$, evaluate
all the tropes at $a'$ and divide by the value obtained in evaluating $Z_{a_6}$ at $a'$ (because $\eta_f[2[a_6]-2[0],y](a)=1$)
so that we obtain $(\eta_f[2[a_1]-2[0],y](a),\ldots,\eta_f[2[a_{45}]-2[0],y](a))\in\mathbb{A}^{16}$.  

\item Let $a''=\begin{ppsmallmatrix}1&1\\1&1\end{ppsmallmatrix}$.
  Following Equation (\ref{eq:gtorg0}), compute $\eta_f[2[a]-2[0],y](a_0+a'')\cdot\eta_f[2[a]-2[0],y](a_0+a''+a)$, for $a\in J_\mathcal{C}[2]$ with even characteristic.
  This gives us $c_a\ne 0$ and we have the algebraic counterpart of $\theta_i^4/\theta_0^4$ ($\ne 0$ for $i\in\{0,1,2,3,4,6,8,9,12,15\}$).
  \begin{remm} 
    Note that $a_0+a''\not\in\{a_1,\ldots,a_6\}$ because otherwise the point $\begin{ppsmallmatrix}0&0\\0&0\end{ppsmallmatrix}$ would be in $Z_{a_0 +a''}$ but it is of even characteristic. Moreover,  $a_0+a''+a\not\in\{a_1,\ldots,a_6\}$ if $a$ is even because otherwise $a$ is in  $\{a_0+a''+a_i\}_{i\in\{1,\ldots,6\}}$ which are exactly the $2$-torsion points having odd characteristic.
    \end{remm}
\item For the Rosenhain invariants, we need (the algebraic counterpart of) $\theta_i^2/\theta_0^2$ for $i\in\{1,2,3,12,15\}$ which we know,
  taking square roots, up to a sign.
More precisely, we need $\frac{\theta_0^2}{\theta_3^2}$, $\frac{\theta_1^2}{\theta_2^2}$ and $\frac{\theta_{12}^2}{\theta_{15}^2}$.
\begin{remm} We can obtain $8$ curves because there are $2^3$ possibilities of sign giving us $8$ triples of Rosenhain invariants.
  One of them or its twist is isogenous to  $\mathcal{C}$.
  Assuming we know the cardinality of the Jacobian $J_{\mathcal{C}}$, we can find the isogenous curve in computing and comparing the cardinalities.
  \end{remm}
But using the algebraic relations between the theta constants we can directly determine the good curve.
\begin{enumerate}
\item We have that
  $(\theta_4\theta_6)^2=(\theta_0\theta_2)^2-(\theta_1\theta_3)^2$.
    \begin{remm}
  This property can be proven using the Duplication formula to write all the $\theta_i^2(\Omega)$ using the $\theta_j(\Omega/2)$ for $j\in\{0,1,2,3\}$
  and comparing the two sides of the equality.
  \end{remm}
Squaring, we obtain $(\theta_4\theta_6)^4=(\theta_0\theta_2)^4+(\theta_1\theta_3)^4-2(\theta_0\theta_2\theta_1\theta_3)^2$.
  Then \[\left(\frac{\theta_4\theta_6}{\theta_0^2}\right)^4-\left(\frac{\theta_2}{\theta_0}\right)^4-
  \left(\frac{\theta_1\theta_3}{\theta_0^2}\right)^4=-2\left(\frac{\theta_2\theta_1\theta_3}{\theta_0^3}\right)^2.\]
  We already know the algebraic counterpart of the LHS and of the square of the RHS.
  We deduce from this equality the good choice of square root of the algebraic counterpart of
$\left(\frac{\theta_2\theta_1\theta_3}{\theta_0^3}\right)^4$ which is the same as for the algebraic counterpart of $\frak{r}_1=\left(\frac{\theta_0\theta_1}{\theta_3\theta_2}\right)^2$.
\item Similarly for $\frak{r}_2$ using $(\theta_4\theta_9)^2=(\theta_1\theta_{12})^2-(\theta_2\theta_{15})^2$.
\item Use $\frak{r}_1\frak{r}_2\frak{r}_3=\frac{\theta_0^4\theta_1^4\theta_{12}^4}{\theta_2^4\theta_3^4\theta_{15}^4}$ to deduce 
the value of the algebraic counterpart of $\frak{r}_3$.
  \end{enumerate}
  \end{enumerate}

\begin{remm}
The fact that the Rosenhain invariants can be determined with the knowledge of quotients of fourth power of theta constants is not surprising
as both are generators for the modular functions invariant by $\Gamma_2(2)$.
\end{remm}
\begin{remm}\label{rem:choix}
The functions $\theta_i^2/\theta_0^2$ are invariant
for $\Gamma_2(2,4)$ and the index $[\Gamma_2(2):\Gamma_2(2,4)]$ is $16$
so that the choice of the square roots we have to take is determined by the choice of $4$ well-choosen quotients (forming a basis).
If we need the algebraic counterpart of the  $\theta_i^2/\theta_0^2$, we generate many
relations from the Duplication formula as we have done before
and do a Gröbner basis for determining relations between the unknown signs. Finally, we can take a
random choice of square roots for the $4$ determining the system because each choice correspond to the evaluation
of the theta constants at $\gamma\cdot\Omega$ for $\gamma\in\Gamma_2(2)/\Gamma_2(2,4)$ 
(and this does not change the isomorphism class of the underlying genus $2$ curve).
\end{remm}

\subsection{Non-hyperelliptic curves of genus $3$}\label{subsec:bitangent}  

We now focus  on the case of non-hyperelliptic curves $\mathcal{D}$ of genus $3$ over a field $K$.
Assume $K$ is algebraically closed. 
We have seen in Proposition \ref{prop:confG3} that the Kummer variety of such a curve has a $(64,28)$-configuration and we can apply similar techniques as in the hyperelliptic case
to compute the tropes and the image of the $2$-torsion points in $\mathbb{P}^7$.
However, we do not know if there is a parameterization allowing one to recover the equation of the curve with these data.
The only way we have found consists in using theta based formulas and the theory of bitangents (see \cite{Weber1876,Rie98}).
The following exposition is based on \cite{TheseChristophe,Ritzenthaler2004} and we refer to these references for more details.

As the curve $\mathcal{D}/K$ is non-hyperelliptic, it can be  embedded as a non-singular plane quartic in $\mathbb{P}^2$. We denote by $x_1$, $x_2$, $x_3$
the coordinates in this projective plane.

\begin{deff}\label{def:bit} A line $l$ is called a bitangent of $\mathcal{D}$ if the intersection divisor $(l\cdot\mathcal{D})$ is of the form $2P+2Q$ for some points $P,Q$ of $\mathcal{D}$.
  If $P=Q$, the point $P$ is called a hyperflex.
\end{deff}

Let $\mathcal{K}$ be the canonical bundle and let $\Sigma=\{L\in \Pic^2(\mathcal{D}):L^2=\mathcal{K}\}$ be the set of theta characteristic bundles.
This set is composed of the two disjoint subsets  $\Sigma_i=\{L\in\Sigma:h^0(L)=i\}$ of even ($i=0$) and odd ($i=1$) theta bundles.
There is a canonical bijection between the set of bitangents, $\Sigma_1$ and the set of odd characteristics for a fixed symplectic basis of the
$2$-torsion. We can deduce from it the following proposition.
\begin{prop} A smooth plane quartic has exactly $28$ bitangents. \end{prop}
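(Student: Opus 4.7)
The plan is to exploit the bijection mentioned just before the statement: bitangents $\leftrightarrow \Sigma_1 \leftrightarrow$ odd characteristics, and then simply count the odd theta characteristics on a genus $3$ curve.

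First I would verify (or recall) the bijection bitangent $\leftrightarrow$ odd theta characteristic. If $l$ is a bitangent with $(l\cdot \mathcal{D})=2P+2Q$, then $L:=\mathcal{O}_{\mathcal{D}}(P+Q)$ has degree~$2$ and $L^{\otimes 2}=\mathcal{O}_{\mathcal{D}}(2P+2Q)=\mathcal{O}_{\mathcal{D}}(l\cdot \mathcal{D})=\mathcal{K}$, because on a smooth plane quartic the canonical divisors are exactly the hyperplane sections. Thus $L\in\Sigma$, and since $P+Q$ is an effective divisor in $|L|$, we have $h^0(L)\ge 1$, so $L\in\Sigma_1$. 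Conversely, given $L\in\Sigma_1$, Clifford's theorem (or a direct dimension count using Riemann--Roch) forces $h^0(L)=1$, so $|L|$ contains a unique effective divisor $D=P+Q$; then $2D\in |\mathcal{K}|$ is cut by some line $l$, and $(l\cdot \mathcal{D})=2P+2Q$ exhibits $l$ as a bitangent. Injectivity in both directions is immediate (different bitangents give different $P+Q$; different $L$ give different $D$).

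Next I would count $|\Sigma_1|$. The set $\Sigma$ is a torsor under $J_{\mathcal{D}}[2]$ via $L\mapsto L\otimes \mathcal{O}(\alpha)$ for $\alpha$ a $2$-torsion point, so $|\Sigma|=|J_{\mathcal{D}}[2]|=2^{2g}=64$ when $g=3$. The partition $\Sigma=\Sigma_0\sqcup\Sigma_1$ is governed by the parity function $q(L)=h^0(L)\bmod 2$, which is the Arf--quadratic form associated to the Weil pairing (this is Mumford's parity theorem: parity is deformation-invariant and agrees with the characteristic mod~$2$). For a non-degenerate quadratic form on $(\mathbb{F}_2)^{2g}$ refining a symplectic form, the number of vectors of odd value is $2^{g-1}(2^g-1)$. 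Applied with $g=3$, this gives
\[
|\Sigma_1|=2^{g-1}(2^g-1)=4\cdot 7=28,
\]
and by the bijection above the number of bitangents is also $28$.

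The main obstacle is the parity statement: one needs to know that $h^0(L)\bmod 2$ is constant in families and is computed by the Arf invariant of the associated quadratic form, so that the classical count $2^{g-1}(2^g-1)$ for odd quadratic forms applies. This is standard Mumford theory, but it is the one non-trivial input; everything else is Riemann--Roch, the identification of canonical divisors on a plane quartic with line sections, and the torsor structure of $\Sigma$ under $J_{\mathcal{D}}[2]$.
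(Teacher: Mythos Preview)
Your proposal is correct and follows exactly the route the paper indicates: the paper simply asserts the canonical bijection between bitangents, $\Sigma_1$, and odd characteristics, and says the proposition is deduced from it, so your argument is a fleshed-out version of that deduction. The only detail you add beyond the paper is the justification that $h^0(L)=1$ (not $\ge 2$) via Clifford/non-hyperellipticity, which is implicit in the paper's definition $\Sigma_1=\{L:h^0(L)=1\}$.
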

The $(64,28)$-configuration comes from this proposition. Indeed, if $l$ is a bitangent and $(l\cdot\mathcal{D})=2P+2Q$, then $2P+2Q$ is a canonical divisor
and $P+Q$ is a theta characteristic as in Section \ref{subsec:def} (from which we can build the $\eta$ and $\eta_f$ functions).
Then if $l'$ is another bitangent giving us the points $P'$ and $Q'$, then the divisor $P'+Q'-P-Q$ is in $W_{-P-Q}$ and it is a $2$-torsion point.
Only the $28$ $2$-torsion points coming from bitangents are in $W_{-P-Q}$.

The equation of $\mathcal{D}$ as a plane quartic is determined and can be reconstructed
knowing the equations of $7$ bitangents forming an Aronhold system (see \cite{CapSern,Lehavi2005}), which is a set of $7$ bitangents such that
if we take $3$ bitangents among these $7$, then the points at which these bitangents intersect the plane quartic do not lie on a conic in $\mathbb{P}^2$.

There exist $288$ Aronhold systems for a given plane quartic and we focus on the following one.

\begin{prop}\label{prop:bitangente}
  An Aronhold system of bitangents for a quartic  is 
  $\beta_1:x_1=0,\quad \beta_2:x_2=0,\quad\beta_3:x_3=0,\quad \beta_4:x_1+x_2+x_3=0,\quad\beta_5:\alpha_{11}x_1+\alpha_{12}x_2+\alpha_{13}x_3=0,\quad
  \beta_6:\alpha_{21}x_1+\alpha_{22}x_2+\alpha_{23}x_3=0,\quad \beta_7:\alpha_{31}x_1+\alpha_{32}x_2+\alpha_{33}x_3=0$, for $[\alpha_{i1}:\alpha_{i2}:\alpha_{i3}]\in\mathbb{P}^2$.    
\end{prop}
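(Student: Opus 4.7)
The plan is to prove the statement by a projective change of coordinates in $\mathbb{P}^2$. The key observation is that $\mathrm{PGL}_3(K)$, which is $8$-dimensional, acts simply transitively on ordered $4$-tuples of lines in $\mathbb{P}^2$ in general position (no three concurrent): there is a unique $\phi\in\mathrm{PGL}_3(K)$ sending any such ordered quadruple of lines to the reference quadruple $(x_1=0,\ x_2=0,\ x_3=0,\ x_1+x_2+x_3=0)$, since the dual statement is the classical fact that one can send any four points in general position to the standard projective frame.

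First I would invoke the classical existence of an Aronhold system $\ell_1,\ldots,\ell_7$ of bitangents of $\mathcal{D}$ (cf.\ \cite{CapSern,Lehavi2005}). Next I would extract four bitangents in the system, say $\ell_1,\ldots,\ell_4$ after relabeling, such that no three of them are concurrent (this is the selection step discussed below). Applying the unique $\phi\in\mathrm{PGL}_3(K)$ that sends $(\ell_1,\ldots,\ell_4)$ to the reference quadruple yields the four standard bitangents $\beta_1,\ldots,\beta_4$ of the statement. The remaining three bitangents $\phi(\ell_5),\phi(\ell_6),\phi(\ell_7)$ are simply three lines in $\mathbb{P}^2$, hence each has an equation of the form $\alpha_{i1}x_1+\alpha_{i2}x_2+\alpha_{i3}x_3=0$ with $[\alpha_{i1}:\alpha_{i2}:\alpha_{i3}]\in\mathbb{P}^2$, giving $\beta_5,\beta_6,\beta_7$. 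After identifying $\mathcal{D}$ with $\phi(\mathcal{D})$ via projective equivalence, this is the Aronhold system claimed.

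The main obstacle is the selection step, namely producing four bitangents from the Aronhold system with no three concurrent. For a generic plane quartic, no three bitangents meet at a common point, so any four elements of the system suffice and the statement follows immediately. For special quartics one must argue more carefully: the defining property of an Aronhold system is that the six contact points of any three of its bitangents fail to lie on a conic, which is a strong genericity condition that excludes most degenerate configurations. A case analysis using this condition, combined with the classical parametrization of Aronhold systems, should produce the required sub-quadruple. In the setting of this paper, where $\mathcal{D}$ arises as an isogenous image in a generic computation over a finite field, the generic case is what matters and the selection step is unproblematic.
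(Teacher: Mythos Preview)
The paper does not actually prove this proposition: it is stated as a classical fact, with the surrounding exposition attributed to \cite{TheseChristophe,Ritzenthaler2004} and the theory of Aronhold systems to \cite{CapSern,Lehavi2005}. So there is no ``paper's own proof'' to compare against; the proposition is quoted as a normal form for an Aronhold system under a projective change of coordinates.

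Your argument is the standard one and is correct. One remark on the selection step you flag as the ``main obstacle'': it is in fact automatic for an Aronhold set. If three bitangents $\ell_i,\ell_j,\ell_k$ of an Aronhold system were concurrent at a point $P$, then the three lines form a degenerate cubic passing through $P$ and through the six contact points; subtracting a line through $P$ (or arguing via the pencil of conics through the intersection points) produces a conic through the six contact points, contradicting the defining property of an Aronhold system. Equivalently, in the classical treatment (e.g.\ Dolgachev, \emph{Classical Algebraic Geometry}, \S6.1) an Aronhold set is characterised by the seven odd theta characteristics summing in a prescribed way, and one checks that no three of the associated bitangents can meet in a point. So you do not need to separate off a ``generic'' case: any four members of the Aronhold system are already in general position, and your $\mathrm{PGL}_3$ argument goes through unconditionally.
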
 

In our case, we do not have the embedding to $\mathbb{P}^2$ because it seems to us that we can not construct it with $\eta_f$ functions
(what would the zero-cycle $\frak{u}$ be ?). However, when $K=\mathbb{C}$, we can find in \cite{Alessio} the following expression of $\alpha_{ij}$ with analytic theta constants.
We fix a symplectic basis and use the following notation
\[\theta_{n_0+2n_1+4n_2+8m_0+16m_1+32m_2}(\Omega):=\thetacar{m/2}{n/2}(0,\Omega)\]
for $m=\,^t\!(m_0,m_1,m_2)$, $n=\,^t\!(n_0,n_1,n_2)$ and $m_i,n_i\in\{0,1\}^2$.  Then
\[
    \alpha_{11}=\frac{\theta_{12}\theta_{5}}{\theta_{33}\theta_{40}},\quad \alpha_{21}=\frac{\theta_{27}\theta_{5}}{\theta_{54}\theta_{40}},\quad \alpha_{31}=-\frac{\theta_{12}\theta_{27}}{\theta_{33}\theta_{54}},\]\[
    \alpha_{12}=\frac{\theta_{21}\theta_{28}}{\theta_{56}\theta_{49}},\quad \alpha_{22}=\frac{\theta_{2}\theta_{28}}{\theta_{47}\theta_{49}},\quad \alpha_{32}=\frac{\theta_{2}\theta_{21}}{\theta_{47}\theta_{56}},\]\[
    \alpha_{13}=\frac{\theta_{7}\theta_{14}}{\theta_{42}\theta_{35}},\quad \alpha_{23}=\frac{\theta_{16}\theta_{14}}{\theta_{61}\theta_{35}},\quad \alpha_{33}=\frac{\theta_{16}\theta_{7}}{\theta_{61}\theta_{42}}.
    \]
    
    The reconstruction of the plane quartic from its bitangents comes from the following result.

\begin{theo}[Riemann]
  Let $\beta_1,\ldots,\beta_7$ be an Aronhold system of bitangents as in Proposition~\ref{prop:bitangente}. Then an equation for the curve is
  \[(x_1\xi_1+x_2\xi_2-x_3\xi_3)^2-4x_1\xi_1 x_2\xi_2 =0\]
where $\xi_1$, $\xi_2$, $\xi_3$ are given by
\[\left\{\begin{array}{l}\xi_1+\xi_2+\xi_3+x_1+x_2+x_3=0,\\ \frac{\xi_1}{\alpha_{i1}}+\frac{\xi_2}{\alpha_{i2}}+\frac{\xi_3}{\alpha_{i3}}+k_i(\alpha_{i1}x_1+\alpha_{i2}x_2+\alpha_{i3}x_3)=0,\qquad i\in\{1,2,3\}\end{array}\right.\]
with $k_1$, $k_2$, $k_3$ solutions of 
\[\left (\begin{matrix}\frac{1}{\alpha_{11}}&\frac{1}{\alpha_{21}}&\frac{1}{\alpha_{31}}\\\frac{1}{\alpha_{12}}&\frac{1}{\alpha_{22}}&\frac{1}{\alpha_{32}}\\\frac{1}{\alpha_{13}}&\frac{1}{\alpha_{23}}&\frac{1}{\alpha_{33}}\end{matrix}\right )
  \left (\begin{matrix}\lambda_1\\\lambda_2\\\lambda_3\end{matrix}\right )=\left (\begin{matrix}-1\\-1\\-1\end{matrix}\right ),\qquad 
\left (\begin{matrix} \lambda_1\alpha_{11}& \lambda_2\alpha_{21}& \lambda_3\alpha_{31}\\\lambda_1\alpha_{12}& \lambda_2\alpha_{22}& \lambda_3\alpha_{32}\\\lambda_1\alpha_{13}& \lambda_2\alpha_{23}& \lambda_3\alpha_{33}\end{matrix}\right )
  \left (\begin{matrix}k_1\\k_2\\k_3\end{matrix}\right )=\left (\begin{matrix}-1\\-1\\-1\end{matrix}\right ).\]
\end{theo}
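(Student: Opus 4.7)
The plan is to verify directly that each of the seven lines $\beta_1, \ldots, \beta_7$ is a bitangent of the plane quartic defined by $F = (x_1\xi_1 + x_2\xi_2 - x_3\xi_3)^2 - 4 x_1\xi_1 x_2\xi_2 = 0$, and then to invoke the Cayley--Aronhold reconstruction theorem (cited in the paper via \cite{CapSern,Lehavi2005}): an Aronhold system of $7$ bitangents determines the plane quartic uniquely up to a scalar. Since by hypothesis $\beta_1, \ldots, \beta_7$ is an Aronhold system of bitangents of $\mathcal{D}$, and the goal is to show it is also one for $F = 0$, the two quartics must coincide.

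Before handling the bitangents, one checks that $\xi_1, \xi_2, \xi_3$ are well-defined linear forms in $x_1, x_2, x_3$, so that $F$ genuinely has degree $4$. The three equations indexed by $i \in \{1,2,3\}$ form an invertible $3\times 3$ linear system with right-hand side $-k_i L_i$, where $L_i = \alpha_{i1}x_1+\alpha_{i2}x_2+\alpha_{i3}x_3$, and solving it gives each $\xi_j$ as a linear combination of the $x_k$'s. The fourth equation $\xi_1+\xi_2+\xi_3 = -(x_1+x_2+x_3)$ is then a consistency condition; forming a linear combination of the three equations with weights $\lambda_i$ and matching coefficients of $\xi_j$ and of $x_j$ produces precisely the two linear systems stated in the theorem, which is exactly how the $\lambda_i$ and $k_i$ were defined.

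Next one verifies that each $\beta_i$ is a bitangent, i.e.\ that $F|_{\beta_i}$ is a perfect square. Writing $a = x_1\xi_1$, $b = x_2\xi_2$, $c = x_3\xi_3$, a short expansion gives the fully symmetric form $F = a^2+b^2+c^2 - 2(ab+ac+bc)$, so setting any one of $a,b,c$ to zero collapses $F$ into $(\cdot-\cdot)^2$; this handles $\beta_1, \beta_2, \beta_3$. For $\beta_4 : x_1+x_2+x_3 = 0$ the compatibility equation forces $\xi_3 = -\xi_1-\xi_2$ on the line; substituting together with $x_1+x_3 = -x_2$ and $x_2+x_3 = -x_1$ turns $(a+b-c)^2 - 4ab$ into $(x_2\xi_1 - x_1\xi_2)^2$.

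The main obstacle is the treatment of $\beta_{4+i}$ for $i \in \{1,2,3\}$. On $L_i = 0$, the $i$-th equation defining $\xi_j$ reads $\sum_j \xi_j/\alpha_{ij} = 0$, which allows one to eliminate $\xi_3$ in terms of $\xi_1, \xi_2$; combining with the parametrization $x_3 = -(\alpha_{i1}x_1+\alpha_{i2}x_2)/\alpha_{i3}$, a direct computation shows
\[ a+b-c = -\frac{\alpha_{i1}^2\, x_1\xi_2 + \alpha_{i2}^2\, x_2\xi_1}{\alpha_{i1}\alpha_{i2}} \quad \text{on } L_i = 0, \]
and therefore
\[ F\bigl|_{L_i=0} \;=\; \left(\frac{\alpha_{i1}^2\, x_1\xi_2 - \alpha_{i2}^2\, x_2\xi_1}{\alpha_{i1}\alpha_{i2}}\right)^{\!2}, \]
a perfect square, so $\beta_{4+i}$ is a bitangent. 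With all seven bitangents verified, $F = 0$ and $\mathcal{D}$ share the same Aronhold system and must coincide by the reconstruction theorem, completing the proof.
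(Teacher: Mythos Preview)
The paper does not prove this theorem; it quotes it as a classical result of Riemann (with implicit reference to Weber and to Ritzenthaler's thesis) and moves on directly to the computation of the $\alpha_{ij}$. Your direct verification therefore goes beyond what the paper offers, and the computational core---the symmetric rewriting $F=a^2+b^2+c^2-2(ab+ac+bc)$ with $a=x_1\xi_1$, $b=x_2\xi_2$, $c=x_3\xi_3$, the consistency check for the overdetermined linear system defining the $\xi_j$, and the restriction of $F$ to each $\beta_i$---is correct and cleanly executed.

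There is, however, a genuine gap in the final step. The reconstruction theorem you invoke (Lehavi, Caporaso--Sernesi) asserts that a smooth plane quartic is determined by an \emph{Aronhold system} of bitangents, not merely by seven bitangents. To conclude that $F=0$ coincides with $\mathcal{D}$ you must therefore also check (a) that $F=0$ is a smooth plane quartic, and (b) that $\beta_1,\ldots,\beta_7$ form an Aronhold system for $F=0$, i.e.\ that for every triple among them the six contact points with $F=0$ do not lie on a conic. You have only verified that each $\beta_i$ is a bitangent of $F=0$. Both missing points can be supplied---smoothness by a direct discriminant computation or by specialisation from generic $\alpha_{ij}$, and the Aronhold condition from the explicit contact loci you have already written down (the zeros of the squares $F|_{\beta_i}$)---but as written the implication ``same seven bitangents $\Rightarrow$ same quartic'' is not what the cited references give you.
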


It remains to explain how to compute the values $\alpha_{ij}$. We proceed as in Section~\ref{subsec:Rosenhain}.
Coming back to notation in Section \ref{subsec:def}, choose $O$ a rational point and $\theta$ a theta characteristic.
Then the divisor of Equation (\ref{eq:varthetasym}) is symmetric so that the $\eta$ and $\eta_f$ functions are even and we use Equation (\ref{eq:evaltheta})
to evaluate them.
(If there is a hyperflex point (see Definition \ref{def:bit}) then take for $O$ this point. Then $\theta=2O$ is a theta characteristic and $\vartheta=0$).
\begin{enumerate}  
\item 
Assuming we have all the tropes and the image of the $2$-torsion points in $\mathbb{P}^7$, we deduce the evaluation of the $\eta_f[2[a]-2[0],y]$ at the torsion points $a'$
where $\eta_f[2[0]-2[0],y](a')\ne 0$, as in the third step of the algorithm based on the Rosenhain invariants.
\item Use Equation (\ref{eq:gtorg0}) to 
  multiply  $\eta_f[2[a]-2[0],y]^2$ by a constant $c_a$ so that we obtain the algebraic counterpart of $\theta_i^4/\theta_0^4$.
  We can not choose square roots randomly.
\item  Generate many relations between squares of theta constants using the Duplication formula or the Riemann theta formula.
  Build a system of equations where the $64$ unknows represent the value $-1$ or $1$.
  Do a Gröbner basis to find $6$ unknow ($[\Gamma_3(2):\Gamma_3(2,4)]=2^6$)  determining the system of equations.
  Take a random choice of square roots for these $6$ unknows. Thus we obtain algebraic theta functions $c_a'\eta_f[2[a]-2[0],y]$ for some constants $c_a'$ such that
  $c_a'^2=c_a$ corresponds to the $\theta_i^2/\theta_0^2$.
\item 
Finally, for the projective point $(\alpha_{11}:\alpha_{12}:\alpha_{13})$, choose any square root of the algebraic counterpart of $\alpha_{11}^2$
and then consider the following equalities (coming from Riemann's theta formula)
\[\begin{array}{c}
\theta_{61}\theta_{45}\theta_{16}\theta_{0}-\theta_{56}\theta_{40}\theta_{21}\theta_{5}+\theta_{49}\theta_{33}\theta_{28}\theta_{12}=0,\\
    \theta_{5}\theta_{12}\theta_{33}\theta_{40}-\theta_{21}\theta_{28}\theta_{49}\theta_{56}-\theta_{42}\theta_{35}\theta_{14}\theta_{7}=0.\\
  \end{array}.\]
From the first one, we have \[(\theta_{61}\theta_{45}\theta_{16}\theta_{0})^2=(\theta_{56}\theta_{40}\theta_{21}\theta_{5})^2+(\theta_{49}\theta_{33}\theta_{28}\theta_{12})^2-2\theta_{56}\theta_{40}\theta_{21}\theta_{5}\theta_{49}\theta_{33}\theta_{28}\theta_{12}\] from which we deduce the good square root of $\alpha_{12}^2$. Similarly, the second equality gives us $\alpha_{13}$.
\item For the  other two projective points, we proceed in the same way using
\[\begin{array}{c}
\theta_{49}\theta_{47}\theta_{28}\theta_{2}-\theta_{54}\theta_{40}\theta_{27}\theta_{5}-\theta_{61}\theta_{35}\theta_{16}\theta_{14}=0,\\
    \theta_{54}\theta_{47}\theta_{27}\theta_{2}-\theta_{49}\theta_{40}\theta_{28}\theta_{5}+\theta_{56}\theta_{33}\theta_{21}\theta_{12}=0,\\
    \\
    -\theta_{55}\theta_{32}\theta_{20}\theta_{3}+\theta_{54}\theta_{33}\theta_{21}\theta_{2}+\theta_{56}\theta_{47}\theta_{27}\theta_{12}=0,\\
\theta_{54}\theta_{33}\theta_{27}\theta_{12}-\theta_{56}\theta_{47}\theta_{21}\theta_{2}+\theta_{61}\theta_{42}\theta_{16}\theta_{7}=0.
  \end{array}\]

\end{enumerate}

\section{Implementation}\label{sec:impl}

We have implemented all the algorithms presented here using the computational algebra system
\emph{Magma} \cite{Magma}. In the case of hyperelliptic curves, the reduced divisors are represented by
their Mumford representation and addition between the reduced divisors $x_1$, $x_2$ is done with
Cantor’s algorithm, giving us the reduced divisor of $x_1 + x_2$. In the non-hyperelliptic
case (genus $3$), we have represented divisors as formal sums of points and used the \emph{Reduction}
function of Magma to reduce divisors. In genus $3$, we did not care about efficiency. We just wanted to test
our algorithms.

In this paper, we only have  optimized the number of evaluations of $\eta_f$ functions in the
genus~$2$ case using the parameterization method. The method computing the isogeny directly
in the Rosenhain form with algebraic theta functions requires the computation of more than
$6$ tropes so it is less efficient than the other method.
We did not optimize our implementation in
this case but computed all the tropes (from which we deduce all the $\phi(a)\in\mathbb{P}^3$) and verified that
it worked. For genus $3$, we did not care about optimization. The point-counting algorithms are
not efficient in practice and computing isotropic subgroups is hard. In the non-hyperelliptic
case, we only tested our algorithm using $\eta$ functions instead of $\eta_f$ functions, so without
computing isogenies. This should not matter because what we want to do is being able to
compute the equation of the curve from the geometry of its Kummer threefold in $\mathbb{P}^7$.
Note that it is easy to verify that two curves are isomorphic. It is enough to compute
invariants of isomorphism classes (Igusa invariants in genus $2$, Shioda invariants for hyperelliptic
curves of genus $3$ and Dixmier-Ohno invariants for plane quartics).
We now give  an example of computation we have done for genus $2$ curves using the
parameterization method. Let
\[\mathcal{C}:Y^2=74737X^5 + 28408X^4 + 89322X^3 + 47216X^2 + 55281X + 86566\]
be a genus $2$ curve over $\mathbb{F}_{100019}$ whose Weierstrass points live in $\mathbb{F}_{100019^5}$. Then $\mathcal{C}$ is
$(7,7)$-isogenous to the curve
\[\mathcal{D}:Y^2=34480X^5 + 27167X^4 + 78914X^3 + 49217X^2 + 75306X + 92103.\]
We do not expose the isotropic subgroup as it is too big to be put here. It lives in an extension of
$\mathbb{F}_{100019}$ of degree $30$. The computation of $\mathcal{D}$ took $27$ seconds on
a $2.50$GHz $64$-bit Intel Core i5-7300HQ.
This includes the computation
of the trope $Z_{a_3}$ (see Remark \ref{rem:Za3}) so that we have the image of the points $\{a_1,\ldots,a_ 6,a_{12},a_{34},a_{35}\}$ in $\mathbb{P}^3$ . The
computation of the matrix allowing one to go from a representation of the Kummer surface to the good
one took slightly less than $1$ second. Computing the image of a single formal point at small
precision, which means $9$ evaluations of $\eta_f$ functions (if $\eta_{a_6}$ is in the basis, $12$ otherwise) took
around $30$ seconds. Extending the precision can be done in $0.4$ seconds and the reconstruction
of rational fractions in $0.02$ seconds. At the end, we can verify the correctness of the rational fractions in
testing if the image of a point is in the Jacobian of $\mathcal{D}$ and by testing the homomorphic property
of the isogeny.

Our implementation is not fast compared to the one of AVIsogenies \cite{Avi} at small primes but we beat them at some examples.
To compare correctly we should consider the size of $\ell$, the size of the finite field $K$ and also the degree of the extension
containing  the $2$-torsion points and the degree of the extension containing $\mathcal{V}$.
But the method exposed here is promising compared to the AVIsogenies method because the complexities in the prime number $\ell$
are $\tilde{O}(\ell^g)$ against $\tilde{O}(\ell^{rg/2})$ with $r=2$ if $\ell$ is a sum of two squares and $r=4$ otherwise.
Indeed, in our algorithm,  only the complexity of the evaluation of $\eta_f$ functions at points of $J_{\mathcal{C}}$ depends on $\ell$
and the complexity of an evaluation is established in Theorem \ref{theo:evaletaf}.

An improvement would be to be able to evaluate the $\eta_f$ functions at $2$-torsion points (and at non-generic points) directly.
This would reduce the number of evaluations of $\eta_f$ functions.
We manage to have results when using $\eta$ functions but nothing with $\eta_f$ functions.
In the latter case, our code returned some errors and we were not able to fully understand them from
a theoretical point of view and from a practical one.

\section*{Acknowledgements}
This research has been done while the author was working as a postdoc in the Caramba team in Nancy. I thank
Pierrick Gaudry for suggesting me to work on \cite{CouvEz}, for helping me in my first footsteps and for
giving me the idea of using the pseudo-addition law on the Kummer variety. I also thank Christophe
Ritzenthaler for a fruitful discussion we had, leading me to Equation~(\ref{eq:gtorg0}).
I thank Jean-Marc Couveignes for answering to the many questions I asked him about his paper and Damien Robert for his help.
Finally, I thank the anonymous reviewer whose comments have greatly improved this manuscript.

\bibliography{bibJMT}

The author can be contacted at enea.milio@gmail.com

\end{document}